\title{Online Optimisation for Online Learning and Control -- From No-Regret to Generalised Error Convergence
}
\author{J. Calliess \\
	OMI, Dept. of Engineering Science,
        University of Oxford, UK.
	}
\date{\today}
\newtheorem{theorem}{Theorem}{}
{}
\newtheorem{remark}{Remark}{}
{}
{}
\newtheorem{definition}{Definition}{}
\newtheorem{example}{Example}{}
\newtheorem{lemma}{Lemma}{}
{}
\newcommand{\matnorm}[1]{{\left\vert\kern-0.25ex\left\vert\kern-0.25ex\left\vert #1 
    \right\vert\kern-0.25ex\right\vert\kern-0.25ex\right\vert}}
\newcommand{\opnorm}[1]{{\left\vert\kern-0.25ex\left\vert\kern-0.25ex\left\vert #1 
    \right\vert\kern-0.25ex\right\vert\kern-0.25ex\right\vert}}		
\newcommand{\norm}[1]{\left\Vert#1\right\Vert}
\newcommand{\abs}[1]{\left\vert#1\right\vert}
\newcommand{\Real}{\mathbb R}
\newcommand{\nat}{\mathbb N}
\newcommand{\argmin}{\text{argmin}}
\newcommand{\vc}[1]{#1}
\newcommand{\SP}[2]{\ensuremath{\mathbf{\langle} \vc{#1} \mathbf{,} \vc{#2} \mathbf{\rangle}}}
\renewcommand{\d}[1]{\text{ d}#1}
\newcommand{\specrad}{\rho}
\newcommand{\expect}[1]{\ensuremath{ \langle#1\rangle } }
\newcommand{\param}{\ensuremath{\theta}}
\newcommand{\paramspace}{\ensuremath{\Theta}}
\newcommand{\inspace}{\ensuremath{ \mathcal X}}
\newcommand{\outspace}{\ensuremath{ \mathcal Y}}
\newcommand{\fctspace}{\ensuremath{ \mathcal F}}
\newcommand{\hypspace}{\ensuremath{ \hat{\mathcal F}}}
\newcommand{\maxerrn}{\bar{\mathfrak N}} 
\newcommand{\metric}{\, \mathfrak{d}} 
\newcommand{\predf}{\, \mathfrak{  \hat f}} 
\newcommand{\predfn}{\, \mathfrak{  \hat f_n}} 
\newcommand{\seq}[2]{\ensuremath{\Bigl(#1\Bigr)_{#2}}}
\renewcommand{\d}{\ensuremath{\text{ d}}}
\newcommand{\beq}{\begin{equation}}
\newcommand{\eeq}{\end{equation}}
\newcommand{\tinc}{\ensuremath{ \tau}}
\newcommand{\cwip}{\rightsquigarrow}
\newcommand{\cwipt}{\stackrel{t \to \infty}\cwip}
\begin{document}

\maketitle

\begin{abstract} This paper presents early work aiming at the development of a new framework for the design and analysis of algorithms for online learning based prediction and control. Firstly, we consider the task of predicting values of a function or time series based on incrementally arriving sequences of inputs by utilising online programming. Introducing a generalisation of standard notions of convergence, we derive theoretical guarantees on the asymptotic behaviour of the prediction accuracies when prediction models are updated by a no-external-regret algorithm.
We prove generalised learning guarantees for online regression and provide an example of how this can be applied to online learning-based control. We devise a model-reference adaptive controller with novel online performance guarantees on tracking success in the presence of a priori dynamic uncertainty. Our theoretical results are accompanied by illustrations on simple regression and control problems.
\end{abstract}

\section{Introduction} Learning is useful in so far it enhances decision making. Often it is necessary to make decisions repeatedly in an uncertain dynamical world. Here, learning can be employed to inform a prediction model that forecasts the consequences of actions.  In the light of information that becomes incrementally available over time, one would hope that a learning algorithm is capable of updating this model online with sufficient rapidity  to facilitate better decisions and to adapt to changing dynamics. And, when actions have real-world impact, it is typically desired to have sufficient theoretical guarantees on the (long-term) dynamics of the system affected by the learning-based decisions. Since decisions will be based on predictions, understanding such dynamics will have to rest on guarantees on the online prediction performance of the learner.

No-regret algorithms and more generally, online programming algorithms can be utilised for fast online learning and prediction of time series (e.g. \cite{Cesa-bianchi:2006,Anava2013}). If a no-external regret bound is achieved then, provided the prediction loss is convex in the parameters, (sub-) optimality guarantees can be given to bound the average prediction errors and the degree of sub-optimality of the parametric predictor whose parameter is the average of all choices of the adapted online predictors' parameters. This has given rise to algorithms that are no longer purely online learning and that decompose learning and prediction into two phases: a learning phase where the online learning method is employed to adapt the learner's parameter online for some time and a subsequent prediction phase making use of the average parameter obtained from the learning phase \cite{Cesa-bianchi:2006,calliess_gordon_aamas08}.

With the aim to avoid such decompositions, we ask a more general question: Without the need to presuppose convexity or having a separate learning phase, does the no-regret property alone allow us to give guarantees on the increased online prediction success of the pure online learning method over time? 
While we show that, without further assumptions, the regret bound alone is not sufficient to ensure vanishing prediction losses in a pure online learning setup in the classical sense of convergence, we do show increasing prediction success in a more general sense: That is, while we will not be able to guarantee that the prediction loss will eventually remain below any upper bound forever, we can guarantee that it will do so for increasingly long durations, provided learning has been allowed to take place sufficiently long.
This gives rise to a new generalised notion of convergence (and thereby of online-learnability) which we will refer to as convergence with \emph{ increasing permanence (i.p.)}. 

Applying online programming algorithms to (parametric) online regression (cf. \cite{Lu16,Hoi2018}), we can then establish i.p.-convergence guarantees on the online prediction loss sequence. Moreover, building on a parametric online regression model to learn and predict a priori uncertain nonlinear dynamics, we derive theory that allows us to devise controllers that are guaranteed to regulate the state of an a priori uncertain nonlinear system to a desired region with increasing permanence. While this property is weaker then traditional desiderata (such as global asymptotic stability or convergence), we argue that it can be easier to achieve by computationally efficient learning-based controllers. Note, in combination with switching control architectures, our results can lead to control designs guaranteed to be eventually stable: For example, we may be satisfied to know our learning-based controller will eventually succeed to move the state into a region of state-space in which another (e.g. linear) controller is capable to take over and achieve stability.

We will introduce our generalised convergence concept in Sec. \ref{sec:cwip} and provide some general theoretical results. In Sec. \ref{sec:OCP4ORegr}, we apply no-regret learning to online regression and provide new learning guarantees based on the results of the previous section. Sec. \ref{sec:mracappl} shows how to combine all preceding results into the design and theoretical analysis of a model-reference adaptive controller with theoretical guarantees on control success. The article will conclude with a brief summary  and an outlook to future work. This is a preprint version of a conference paper that is to be presented at the IFAC- European Control Conference (ECC), 2019.

%

\section{Convergence with Increasing Permanence} \label{sec:cwip}

\begin{definition}[Convergence with increasing permanence]
	\label{definition}
	Let $\mathbb S$ denote a space endowed with metric $\metric : \mathbb S^2 \to \Real$ and consider 
	the sequence $\seq{s_t}{t \in \nat}$ in $\mathbb S$. 
	We say the sequence $(s_t)_{t\in\nat}$  \emph{converges} to $s' \in \mathbb S$  \emph{with increasing permanence (.i.p.)}, written $s_t \cwip s'$, if and only if the sequence remains in any given ball around $s'$ for increasingly long durations. That is, 
	$s_t \cwip s' :\Leftrightarrow $
		$\forall \epsilon >0, D,N \in \nat \exists n \geq N \in \nat \forall i \in \{1,...,D\}: 
		\metric(s',s_{n+i})  \leq \epsilon.$
\end{definition}



It is easy to see that any sequence that is convergent in the classical sense also is i.p.-convergent. 
However, convergence with .i.p. is a more general concept than standard convergence. To see this consider the following example:
\begin{example}\label{ex:incpermconvseq}
	Define the index set $Z_T:= \{ t \in \nat |  t \leq T, \exists n \in \nat: t = 2^n \}$.
	Define the sequence $(q_t)$ with $q_t := \begin{cases} 1, t \in Z_\infty\\
	1/t^2, \text{ otherwise }\end{cases}$. It is easy to show that we have $s_t \cwip 0 \wedge s_t \cwip 0$ but $ s_t \nrightarrow 0$.
\end{example}

Just as with standard convergence, it will be convenient to consider convergence to sets:
\begin{definition}
	A sequence $\seq{s_t}{t \in \nat}$ converges to a set $S$ with increasing permanence, written $s_t  \stackrel{t \to \infty} {\cwip} S$, iff  $\inf_{s \in S} \metric(s,s_t) \stackrel{t \to \infty}{\cwip} 0 $.
\end{definition}

In what is to follow we will consider real-valued sequences and convergence with respect to the canonical metric $ \metric(s,s') = \abs{s-s'}$.

\begin{lemma} \label{lem:sumStconvtpnewconv}
	Assume we are given a non-negative real-valued sequence $(s_t)_{t \in \nat}$ with \newline
	$ S_T := \frac{1}{T} \sum_{t=1}^T s_t \stackrel{T \to \infty}{\to} 0.$
	Then we have: $s_t \cwipt 0$.
%
%
\end{lemma}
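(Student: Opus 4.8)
The plan is to unfold Definition~\ref{definition} for the non-negative real sequence $(s_t)$ with limit point $s'=0$ and the canonical metric, and then argue by contradiction. Concretely, $s_t \cwipt 0$ \emph{fails} precisely when there exist $\epsilon_0 > 0$ and $D_0, N_0 \in \nat$ such that for \emph{every} $n \geq N_0$ there is some index $i \in \{1,\dots,D_0\}$ with $s_{n+i} > \epsilon_0$ (here non-negativity lets me drop the absolute value). The idea is to show that such a persistent failure forces the Cesàro averages $S_T$ to stay bounded away from $0$, contradicting the hypothesis $S_T \to 0$.

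Assuming the failure condition, I would partition the index range $\{N_0+1, N_0+2, \dots\}$ into consecutive, pairwise disjoint blocks $B_j := \{N_0 + (j-1)D_0 + 1, \dots, N_0 + jD_0\}$ of length $D_0$, for $j = 1,2,\dots$. Applying the failure condition with $n = N_0 + (j-1)D_0 \geq N_0$ yields, for each $j$, at least one index $t_j \in B_j$ with $s_{t_j} > \epsilon_0$; since the $B_j$ are disjoint, the indices $t_1,\dots,t_k$ are distinct and all lie in $\{1,\dots,N_0+kD_0\}$. Then, using non-negativity of all terms to discard the remaining summands,
\[
  \sum_{t=1}^{N_0 + kD_0} s_t \;\geq\; \sum_{j=1}^{k} s_{t_j} \;>\; k\,\epsilon_0,
  \qquad\text{so}\qquad
  S_{N_0 + kD_0} \;>\; \frac{k\,\epsilon_0}{N_0 + kD_0}.
\]
Letting $k \to \infty$, the right-hand side tends to $\epsilon_0/D_0 > 0$, whence $\limsup_{T\to\infty} S_T \geq \epsilon_0/D_0 > 0$, contradicting $S_T \stackrel{T\to\infty}{\to} 0$. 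This contradiction establishes $s_t \cwipt 0$.

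I do not anticipate a genuine obstacle; the only care needed is in the bookkeeping of the nested quantifiers of Definition~\ref{definition} and, above all, in choosing the windows $B_j$ \emph{disjoint} so that each ``bad'' index contributes exactly once to the partial-sum lower bound — an overlapping cover would overcount and would not yield the bound $k\epsilon_0$. Non-negativity of $(s_t)$ is used twice: to identify $\abs{s_{n+i}} \leq \epsilon$ with $s_{n+i} \leq \epsilon$, and to drop the off-block terms in the partial sum without decreasing it.
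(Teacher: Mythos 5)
Your proof is correct and follows essentially the same route as the paper's: negate the definition of i.p.-convergence, extract disjoint blocks of length $D_0$ each containing a term exceeding $\epsilon_0$, and use non-negativity to force the Ces\`aro averages to stay bounded below by roughly $\epsilon_0/D_0$, contradicting $S_T \to 0$. The only (cosmetic) difference is that you conclude by letting $k \to \infty$ in the lower bound $S_{N_0+kD_0} > k\epsilon_0/(N_0+kD_0)$, whereas the paper substitutes a specific index ($m = 2q$) to exhibit the contradiction explicitly; your bookkeeping of the disjoint blocks is, if anything, cleaner.
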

%
%
\begin{remark}
	Note, that generally, convergence $ \frac{1}{T} \sum_{t=1}^T s_t \stackrel{T \to \infty}{\to} 0$ does \emph{not} imply classical convergence $s_t \to 0$.
	For a counterexample, consider the sequence  $(q_t)$ from Ex. \ref{ex:incpermconvseq}. It is easy to check that indeed $ \frac{1}{T} \sum_{t=1}^T q_t \stackrel{T \to \infty}{\to} 0$. But, as discussed above, $(q_t)$ does not converge to 0 
	in the classical sense.
\end{remark}
\subsection{Contractive dynamical systems with increasingly permanently bounded disturbances}
In set-point or tracking control, controllers often generate actions with the aim to turn the closed-loop error dynamics of a plant into a stable system with equilibrium $x^*=0$. This means that the closed-loop dynamics can be represented by a contraction with that fixed-point. However, when the dynamics are not know a priori but are learned online, the actual dynamics deviate from a contraction by some time-varying disturbance. If the online learning method succeeds, this disturbance will eventually become increasingly small for increasing durations of time. Motivated by the analysis of such situations, we will next give i.p. convergence guarantees for disturbed contractive systems.

\begin{theorem} \label{thm:cwipcontrdynsys_main} Let $(\inspace, \norm{\cdot})$ be a normed vector space and $\phi : \inspace \to \inspace$ be a contraction with fixed point $x_* \in \inspace$ and Lipschitz constant $\lambda <1$ relative to the metric canonically induced by norm $\norm{\cdot}$.   Let $(y_t)_{t \in \nat} , (d_t)_{t \in \nat}$  be sequences in $\inspace$ satisfying \begin{equation}\label{eq:contractdyn}
	y_{t+1}  = \phi( y_t) + d_t
\end{equation}
for all time steps $t \in \nat_0$. We assume the \emph{disturbances} $d_t$ to be bounded.
Let $r \geq 0$. 
If  $\norm{d_t} \stackrel{n\to \infty}{\cwip} [0,r]$ then we have: $$\norm{ y_t - x_* } \stackrel{t\to \infty}{\cwip} \Bigl [0,  \frac{r}{1- \lambda} \Bigr ].$$
\end{theorem}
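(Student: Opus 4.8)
The plan is to reduce the vector dynamics to a scalar error recursion and then exploit the geometric decay coming from $\lambda<1$ on the long windows on which the disturbance is i.p.-small. Set $e_t := \norm{y_t - x_*}$. Since $\phi$ has fixed point $x_*$ and Lipschitz constant $\lambda$, \eqref{eq:contractdyn} gives
\[
e_{t+1} = \norm{\phi(y_t) - \phi(x_*) + d_t} \leq \lambda\, e_t + \norm{d_t},
\]
and unrolling this linear recursion from any time $\tau$ yields, for every $m\in\nat$,
\[
e_{\tau+m} \;\leq\; \lambda^m e_\tau + \sum_{k=1}^{m}\lambda^{m-k}\norm{d_{\tau+k-1}}.
\]
Because the disturbances are bounded, say $\norm{d_t}\leq M$ for all $t$, the same recursion (with $\tau=0$) delivers the uniform bound $e_t \leq B := e_0 + \tfrac{M}{1-\lambda}$ for all $t\in\nat_0$ --- the standard ISS-type estimate for a perturbed contraction. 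This uniform $B$ is exactly what lets the term $\lambda^m e_\tau$ be driven to $0$ no matter which window the hypothesis hands us.

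Next I would translate the hypothesis. Since $\norm{d_t}\geq 0$, the distance of $\norm{d_t}$ to $[0,r]$ is $\max\{0,\norm{d_t}-r\}$, so $\norm{d_t}\cwip[0,r]$ unwinds, via Definition~\ref{definition}, to: for every $\epsilon>0$ and every $D,N\in\nat$ there is $n\geq N$ with $\norm{d_{n+i}}\leq r+\epsilon$ for all $i\in\{1,\dots,D\}$. The goal, conversely, is to produce for arbitrary $\epsilon'>0$ and $D',N'\in\nat$ an index $n'\geq N'$ with $e_{n'+i}\leq \tfrac{r}{1-\lambda}+\epsilon'$ for all $i\in\{1,\dots,D'\}$ (which is the set-distance form of the conclusion, again using $e_t\geq 0$).

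The window argument then proceeds as follows. Given $\epsilon',D',N'$, choose $\epsilon>0$ with $\tfrac{\epsilon}{1-\lambda}<\tfrac{\epsilon'}{2}$ and choose $K\in\nat$ with $\lambda^K B<\tfrac{\epsilon'}{2}$; apply the hypothesis with $D:=K+D'$ and $N:=N'$ to obtain $n\geq N'$ such that $\norm{d_{n+1}},\dots,\norm{d_{n+D}}\leq r+\epsilon$. Feeding this into the unrolled recursion started at $\tau=n+1$, with $e_{n+1}\leq B$, gives for every $m\in\{1,\dots,D\}$
\[
e_{n+1+m} \;\leq\; \lambda^m B + (r+\epsilon)\sum_{k=1}^{m}\lambda^{m-k} \;\leq\; \lambda^m B + \frac{r+\epsilon}{1-\lambda},
\]
so that for all $m\in\{K,\dots,D\}$ one gets $e_{n+1+m} < \tfrac{\epsilon'}{2} + \tfrac{r}{1-\lambda} + \tfrac{\epsilon'}{2} = \tfrac{r}{1-\lambda}+\epsilon'$. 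Setting $n':=n+K\geq N'$ and writing $n'+i = n+1+(K+i-1)$ for $i\in\{1,\dots,D'\}$, the exponent $m=K+i-1$ ranges over $\{K,\dots,K+D'-1\}\subseteq\{K,\dots,D\}$, so $e_{n'+i}<\tfrac{r}{1-\lambda}+\epsilon'$ for all such $i$, which is precisely what was required. Hence $\norm{y_t-x_*}\cwipt[0,\tfrac{r}{1-\lambda}]$.

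\textbf{Main obstacle.} Nothing in the estimates is deep; the only real care is the quantifier bookkeeping --- consuming a window of length $D=K+D'$ so that the burn-in $K$ needed to kill the initial-condition term $\lambda^m e_{n+1}$ is paid for \emph{in addition} to the desired output window $D'$, and then re-indexing by $n'=n+K$ so the good window lines up with the range $\{1,\dots,D'\}$ demanded by Definition~\ref{definition}. One should also double-check that the uniform bound $B$ (and hence the boundedness assumption on the $d_t$) is genuinely used: without it the term $\lambda^m e_{n+1}$ cannot be made small uniformly over the windows supplied by the hypothesis.
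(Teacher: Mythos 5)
Your proof is correct, and the crucial quantifier bookkeeping --- enlarging the window supplied by the hypothesis to $D=K+D'$ so that the burn-in $K$ killing the initial-condition term is paid for on top of the output window, then re-indexing by $n'=n+K$ --- is exactly the mechanism the paper uses. The one genuine difference is the decomposition. You work directly with $e_t=\norm{y_t-x_*}$ and use the fixed-point identity $\phi(y_t)-x_*=\phi(y_t)-\phi(x_*)$ to get the scalar recursion $e_{t+1}\leq\lambda e_t+\norm{d_t}$ in one line. The paper instead introduces the auxiliary undisturbed trajectory $\bar x_{n+1}=\phi(\bar x_n)$ started at $\bar x_0=y_0$, bounds $\norm{y_{k+n}-\bar x_{k+n}}$ by the analogous unrolled recursion, and then splits $\norm{y_{p+i}-x_*}\leq\norm{x_*-\bar x_{p+i}}+\norm{y_{p+i}-\bar x_{p+i}}$, which forces a three-way $\epsilon/3$ budget and an extra waiting index $n_0$ for the Banach-iteration convergence $\bar x_n\to x_*$. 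Your route is strictly leaner: it needs one fewer limit process, one fewer index to juggle, and only an $\epsilon'/2$ split; what the paper's detour buys is essentially nothing here beyond making the comparison with the linear case (Thm.~\ref{thm:stablewipperturbedlindyn_main}, where the nominal trajectory is $M^n x_k$) more visually parallel. You also correctly flag where boundedness of the $d_t$ enters --- it yields the uniform bound $B$ on $e_{n+1}$ over all admissible windows, which is the same role the paper's bound $\maxerrn$ plays in its inequality (\ref{ineq:q0lambdaj}).
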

For the special case that the disturbances $d_t$ vanish with i.p., the theorem guarantees that the perturbed sequence $(y_t)$ also converges with i.p. to the fixed point $x_*$. 
Another important special case, which we consider below, is when  $\inspace$ is finite-dimensional and $\phi(x) = M x$ for some Schur (i.e. stable) matrix $M$ with $\specrad(M) <1$. It is a special case since then $\phi$ is an eventually contracting map and hence, a contraction relative to some metric $\tilde d$ that is uniformly equivalent to the metric $\metric: (x,x') \mapsto \norm{x-y} $ \cite{Hasselblatt2003}.  In this case we have:
\begin{theorem}\label{thm:stablewipperturbedlindyn_main}
        Let $\norm{\cdot}, \matnorm{\cdot}$ denote the Euclidean and spectral norms, respectively.  Consider the recurrence $x_{t+1} = M x_t + d_t$ $(t \in \nat)$. Let $\sigma =\sum_{i=0}^{\infty}  \matnorm{M^{i}} < \infty$.	
	If the sequence of disturbances $(d_t)$ is bounded and vanishes with increasing permanence up to error $r>0$, i.e. $\norm{d_t} \cwip [0,r] \wedge \exists b \forall t: \norm{d_t} \leq b$ then we have:
	\[\norm{x_t} \stackrel{t \to \infty}{\cwip} [0,\sigma r].\]
\end{theorem}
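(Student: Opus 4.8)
One could try to deduce this from Theorem~\ref{thm:cwipcontrdynsys_main} by recalling that a Schur matrix $M$ is a contraction with respect to some norm equivalent to $\norm{\cdot}$, but that route does not transparently produce the sharp radius $\sigma r$, so the plan is to argue directly. I would unroll the linear recurrence and use the summability $\sigma = \sum_{i\geq 0}\matnorm{M^{i}} < \infty$ twice: first to obtain a uniform bound on $\norm{x_t}$, then to make the contribution of the distant past decay. Unrolling from any index $n$ gives $x_{n+j} = M^{j} x_n + \sum_{k=0}^{j-1} M^{j-1-k} d_{n+k}$ for every $j \in \nat$; specialising to the initial index and using $\norm{d_t}\leq b$ yields $\norm{x_t}\leq \matnorm{M^{t}}\norm{x_0} + b\sum_{m=0}^{t-1}\matnorm{M^{m}} \leq \matnorm{M^{t}}\norm{x_0} + \sigma b$, and since $\sigma<\infty$ forces $\matnorm{M^{i}}\to 0$, the quantity $B := \sup_{t}\norm{x_t}$ is finite. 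This $B$ is what we feed back into the unrolled identity.

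Next I would fix $\epsilon > 0$, a target duration $D\in\nat$, and a threshold $N\in\nat$, and exhibit a block of at least $D$ consecutive indices on which $\norm{x_t} \leq \sigma r + \epsilon$. The trick is to request a \emph{longer} good block from the hypothesis: pick $L\in\nat$ large enough that $\matnorm{M^{j}}B + \matnorm{M^{j-1}}b \leq \epsilon/2$ for all $j\geq L$ (possible since $\matnorm{M^{i}}\to 0$), set $\delta := \epsilon/(2\sigma)$ and $D' := D + L$, and apply the definition of $\norm{d_t}\cwip[0,r]$ with tolerance $\delta$, duration $D'$ and threshold $N$ to obtain an index $n\geq N$ with $\norm{d_{n+i}}\leq r+\delta$ for all $i\in\{1,\dots,D'\}$.

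On that block, for each $j\in\{L,\dots,D'\}$ I would split $x_{n+j} = M^{j}x_n + M^{j-1}d_n + \sum_{k=1}^{j-1} M^{j-1-k} d_{n+k}$ and bound the three pieces: $\matnorm{M^{j}}\norm{x_n} + \matnorm{M^{j-1}}\norm{d_n} \leq \matnorm{M^{j}}B + \matnorm{M^{j-1}}b \leq \epsilon/2$ by the choice of $L$ (the one uncontrolled disturbance $d_n$ only ever appears multiplied by a high power of $M$), whereas $\sum_{k=1}^{j-1}\matnorm{M^{j-1-k}}\norm{d_{n+k}} \leq (r+\delta)\sum_{m=0}^{j-2}\matnorm{M^{m}} \leq \sigma(r+\delta) = \sigma r + \epsilon/2$. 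Adding these gives $\norm{x_{n+j}} \leq \sigma r + \epsilon$ for all $j\in\{L,\dots,D'\}$, i.e. on $D+1\geq D$ consecutive indices. Since $\inf_{s\in[0,\sigma r]}\abs{s-\norm{x_{n+j}}} = \max\{0,\norm{x_{n+j}}-\sigma r\}$, relabelling the start of this block as $n+L-1 \geq N$ produces exactly the defining condition for $\norm{x_t}\cwip[0,\sigma r]$, which is the claim.

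The main obstacle is getting the order of quantifiers right so that the two transients can be tamed simultaneously: the warm-up length $L$ that absorbs the pre-window history $M^{j}x_n + M^{j-1}d_n$ must depend only on $\epsilon$ (through $B$ and $b$) and neither on $D$ nor on the still-unknown block start $n$, so that requesting a good block of length $D' = D + L$ from the $d$-hypothesis is legitimate; at the same time the within-window part has to be held below $\sigma r + \epsilon/2$ uniformly over the whole (long) block, which is precisely why the full geometric sum $\sum_{m\geq 0}\matnorm{M^{m}} = \sigma$ is needed rather than a finite truncation. Once this dependence structure is in place the remaining estimates are routine, and the case $r = 0$ (vanishing disturbances, hence $\norm{x_t}\cwip 0$) follows at once.
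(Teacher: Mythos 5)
Your proposal is correct and follows essentially the same route as the paper's own proof: unroll the recurrence as $x_{n+j}=M^{j}x_n+\sum_{k}M^{j-1-k}d_{n+k}$, use boundedness of $(x_t)$ and $\matnorm{M^{j}}\to 0$ to kill the transient uniformly in the block start, and request a good disturbance block of extended length $D$ plus the warm-up length with tolerance shrunk by $\sigma$ (the paper uses $n_0$ and $\epsilon/(2\sigma)$ where you use $L$ and $\delta$). The one place you are slightly more careful is in peeling off $M^{j-1}d_n$ and controlling it via the global bound $b$, since the i.p.\ window only covers $d_{n+1},\dots,d_{n+D'}$; the paper absorbs $d_n$ into its running maximum without comment.
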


\section{Programming for Online Regression}
\label{sec:OCP4ORegr}
\subsection{Background: Online (Convex) Programming and No-Regret Algorithms}

In an \emph{online programming (OP)} problem  \cite{gordon99regret,zinkevich:icml2003}, an arbitrary sequence of (stage) \emph{cost}, or \emph{loss}, functions, $(\ell_{t})_{t \in \nat}$, on a domain $F$, is revealed step by step.
At each \emph{stage} or \emph{time step} $t$, one is asked to choose an \emph{action} $\vc a_t$ from a  \emph{feasible set} $F$. The choice is to be made on the basis of information $\mathbb I_t$ about past actions and cost functions up to stage $t-1$.
After the choice is made, information $\mathbb F_t$ about the current cost function $\ell_{t}$ is revealed, and the algorithm suffers a loss amounting to  $\ell_{t}(\vc a_t)$.
The information that the OP algorithm can use to choose the action $\vc a_t$ is summarised in the information set
$\mathbb I_t = \left\{\seq{a_q}{q < t}, \seq{\mathbb F_q}{q<t} \right\}.$
OP problems can arise in varying setups depending on the kind of information available to the algorithm at the time of decision making and the nature of the \emph{loss feedback} it receives after having made the decision.  
%
%
%

To measure the performance of an OP algorithm, we can compare its accumulated loss up to time step $T$ to an estimate of the best cumulative cost attainable against the sequence $(\ell_{t})_{t=1}^T$.
In particular, we estimate the best attainable cost as the cost of the best constant action choice
$\vc a^*_{T}  \in \argmin_{a \in F} \sum_{t=1}^T \ell_t(a)$
chosen with knowledge of the entire sequence $\ell_{1},\ldots,\ell_{T}$. 
This choice leads to a measure called \emph{external regret} or just \emph{regret}:
$\mathcal{R}(T) = \sum_{t=1}^T \ell_{t}(\vc a_t) - \sum_{t=1}^T \ell_{t}(\vc a^*_{T}).$
An algorithm is said to be \emph{no-(external)-regret} if it guarantees that $\max\bigl(0,\mathcal{R}(T) \bigr)$ is not contained in $\Omega(T)$. That is, if there exists a nonnegative sublinear function  $\Delta(T) \in o(T)$ with $\mathcal{R}(T)\leq\Delta(T)$. 
The term \emph{no-regret} is motivated by the fact that the limiting average nonnegative regret of a no-regret algorithm vanishes, i.e., $\limsup_{T \rightarrow \infty} {\max(0,\mathcal{R}(T)})/{T} = 0$.
The sublinear function $\Delta$ is called a \emph{regret bound}.
%
%

%

A prominent special case arises when both the feasible set and the stage loss funtions are convex. The pertaining problem is then called an \emph{Online Convex Programming (OCP)} problem.
Devising no-regret algorithms and bounds for OCP problems is an active area of research in theoretical computer science and machine learning (e.g. \cite{bubeck2014convex,hazan2016optimal,abernethy2012interior,bubeck2015bandit,hu2016bandit,bubeck2014convex}). A particular well-known no-regret algorithm to solve OCPs is the \emph{Greedy Projection} (GP) algorithm \cite{zinkevich:icml2003}, 
which requires feedback about the gradients of the stage loss functions. 
While here, we focus on the case, where noise-free losses are observable, there exist no-regret algorithms for noise corrupted observations \cite{CesaBianchi2010,Belmega2018}.
Recently,  \emph{Online Projected Stochastic Gradient Descent (OPSGD)} \cite{bubeck2014convex} has been proposed which is applicable in the case of pure bandit loss feedback but provides stochastic no-regret bounds that hold true with arbitrarily adjustable probability.
While we focus on the case of deterministic no-external-regret bounds, all our results do extend to such stochastic settings, albeit our guarantees would then merely hold with the pertaining probabilistic confidence provided by the probabilistic no-regret bound.

No-regret bounds and algorithms have been studied and deployed in a great many online learning scenarios, including, among others, time-series prediction in ARMA models \cite{Anava2013}, multi-agent coordination \cite{calliess_gordon_aamas08}, game-theory \cite{shapire:1996,Blumroutingnoregnash:2006,Roughgarden2016}. For a classic text book, the reader is referred to \cite{Cesa-bianchi:2006}, whereas a recent survey can be found in \cite{Hoi2018}.
In what is to follow, we will illustrate the application of our new convergence concept to the particular domain of online regression. In the context of kernel methods, online regression algorithms are briefly touched upon in \cite{Lu16}. However, their theoretical guarantees are limited to online classification.    Applications to other online learning domains provide ample avenues to future work. A recent survey of existing approaches is provided in \cite{Hoi2018}.

\subsection{Application to Online Learning and Prediction}
To keep the exposition concrete, we consider the following online learning and prediction problem:  
Let $(\inspace,\metric_\inspace), (\outspace, \metric_\outspace)$ be two metric spaces.

An algorithm is given the task to predict a time series $\seq{y_t}{t \in \nat} \in \outspace^\nat$ online on the basis of incrementally observing a related time series $\seq{x_t}{t \in \nat} \in \inspace^\nat$ and obtaining feedback after each prediction. We assume there exists a functional relationship 
\begin{equation}
y_t = f(x_t)
\end{equation}
where $f: \inspace \to \outspace$ is some (a priori uncertain) \emph{target function} residing in some class $\fctspace$. To make predictions, the algorithm has access to a  \emph{hypothesis space} $\hypspace$ of \emph{predictors} $\predf(\cdot; \param ): \inspace \to \outspace \,\, (\param \in \paramspace)$ parametrised by \emph{ parameter space} $\paramspace$.
We assume prediction accuracy is measured by a non-negative loss function $$\ell(\cdot;\cdot): \inspace \times \paramspace \to \Real_{\geq 0}$$ which is zero for inputs $x$ and parameters $\param$ for which $f(x) = \predf(x;\param)$.  For example, the loss might quantify the squared distance, i.e.  $\ell(x;\param) =\metric_\outspace\bigl(f(x), \predf(x;\param)\bigr)^2$.
To connect to the OP setup, we can define the stage loss function $$ \ell_t: \param \mapsto \ell(x_t;\param)$$

At the beginning of each stage at time step $t \in \nat$, the algorithm has to pick a parameter $\param_t$ and use it to make a prediction utilising the chosen predictor: $$\hat y_t = \predf(x_t;\param_t).
$$
The parameter $\param_t$ (and hence, the predictor) is chosen on the basis of information set $\mathbb I_t$. For now, we assume  this set contains all previous inputs $(x_1,...,x_{t-1})$ (but might exclude $x_t$) and information about the pertaining prediction losses. (For example, the latter might be given by revelation of the true $y_t = f(x_t)$ after each prediction at time $t$,  from which the loss function $\ell_t$ can be computed.) The stage at time $t$ concludes by revelation of the loss information after the prediction was made and the process enters the next stage at the next time step $t+1$.

A special case of this setting is online regression. Here the task is learning $f$ online and becoming better at predicting its output values based on an i.i.d. input samples that become incrementally available.
\subsubsection{Example: Online regression with Radial Basis Function Neural Networks}
As a concrete example, consider the case where the hypotheses class $\fctspace$ is a set of radial-basis function neural networks (RBFNN) with known structure:

The component functions $f_i: \Real^d \to \Real $ $(i=1,...,d')$ of target function $f$ can be represented by an RBFNN with some weight parameter $\param_i^* \in \Real^m$. These weights are assumed to be unknown a priori but contained in some known convex feasible set $F \subset \Real^m$.
	That is $\forall i \exists \param_i^* \in F: f_i(x) = \SP{\param^*_i}{\phi_i(x)}$. Here, $\phi :\Real^d \to \Real^m$ with each component function $\phi_i(\cdot) = \exp( - \frac{\norm{\cdot -c_i}^2}{\sigma_i^2} )$ being a radial basis function. 


In this representation, the online learning task can be reduced to updating the weights $\param_t:= (\param_{1,t},...,\param_{d,t})$ at time $t$ and insert it into the prediction hypothesis $ \predf(\cdot;\param ) := \SP{\param}{ \phi( \cdot ) }$ to predict the next observation as per $ \hat y_{t} := \predf(x_t; \param_t)$.

To reduce the learning task to solving an OCP, once we have received the true value $y_t = f(x_t)$, we can compute and feed back information about the stage loss function $\ell_t: \param \mapsto  \norm{  \predf_t(x_t;\param) - y_t }_2^2 .$
It is easy to check that, by construction, each $ \ell_t(\cdot)$ is a convex function.
Therefore, by interpreting the weights as actions ($a_t := \param_t$) in the OCP paradigm, the weight hypotheses $\param_t$ can be generated online by utilising any existing no-regret algorithm designed for OCPs.

In the following example simulation, we have done so employing Greedy Projection \cite{zinkevich:icml2003} (refer to Fig. \ref{fig:onlineregrrbfn}).
Here, the task was to predict the real-valued time series $y_t =f(x_t)= \SP{\param^*}{\phi(x_t)}$ where $x_t,y_t \in \Real, \forall t$ and $\param^* \in \Real^4$ was drawn at random and we chose RBF centres $c_1= -1,c_2 = -1/3,c_3=1/3,c_4=1$ and length scale parameters $\sigma_1=-1,\sigma_2=-1/3,\sigma_3=1/3,\sigma_4 = 1$. At the start of the online prediction task the initial hypothesis parameter $\param_1$ was drawn at random. 
Subsequently we generated the time series $y_t$ by sampling the sequence $x_t$ uniformly i.i.d. at random from the interval [-2,2] and computing $y_t = f(x_t)$.
The updates of the $\param_t$ were computed with Greedy Projection (GP). Requiring gradient information of the stage  loss functions,  we fed GP the stage prediction loss gradient $\nabla_{\param_t} \ell_t(\param_t) = 2 (\predf(x_t;\param_t)) \phi(x_t)$ after it had computed a new weight $\param_t$ at each time step $t =1,...,100$.
\begin{figure*}[t]
	\label{fig:onlineregrrbfn}
	\centering
	\begin{tabular}{cccc}
		\includegraphics[width=0.23\textwidth]{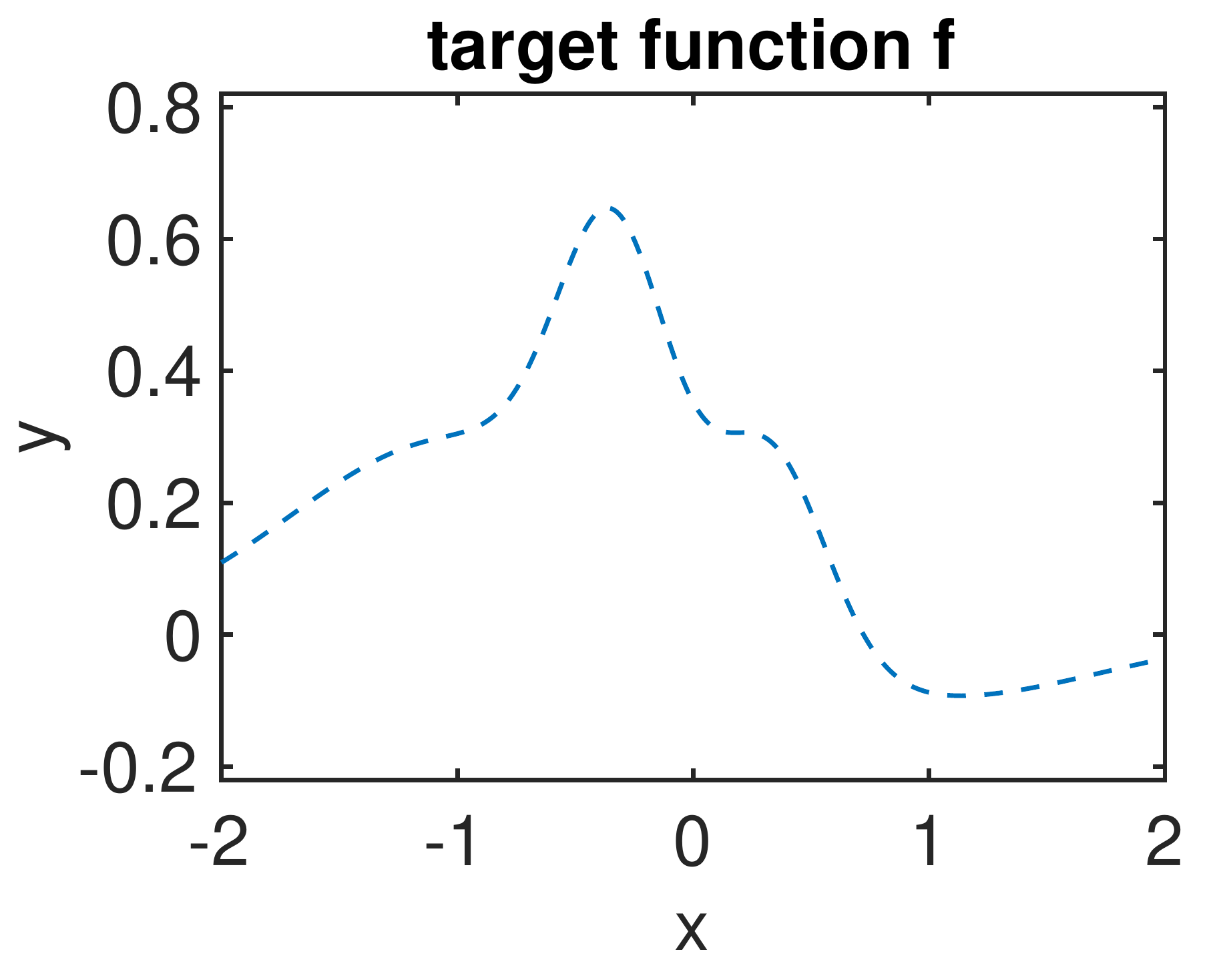}
		&\includegraphics[width=0.23\textwidth]{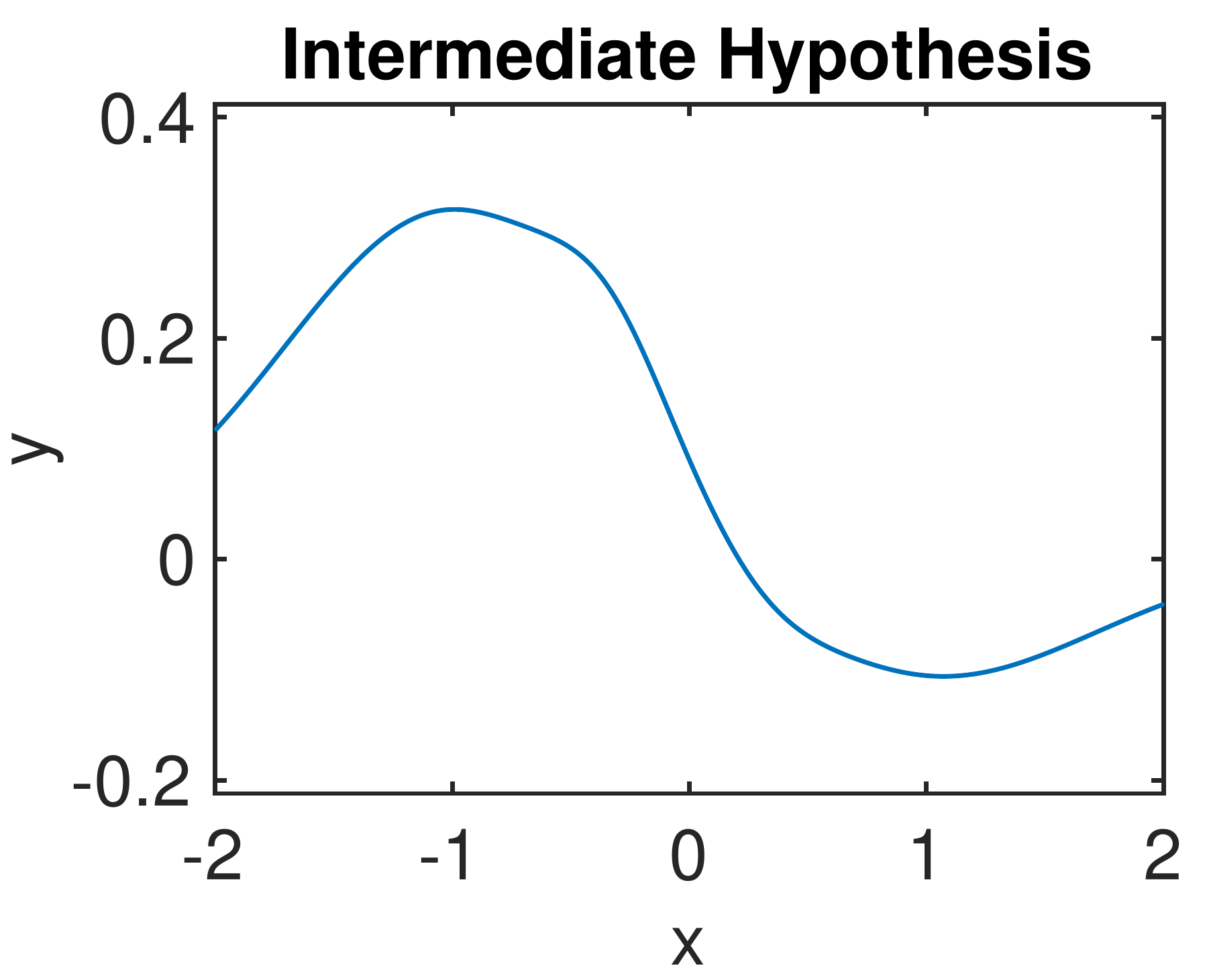} 
		&\includegraphics[width=0.23\textwidth]{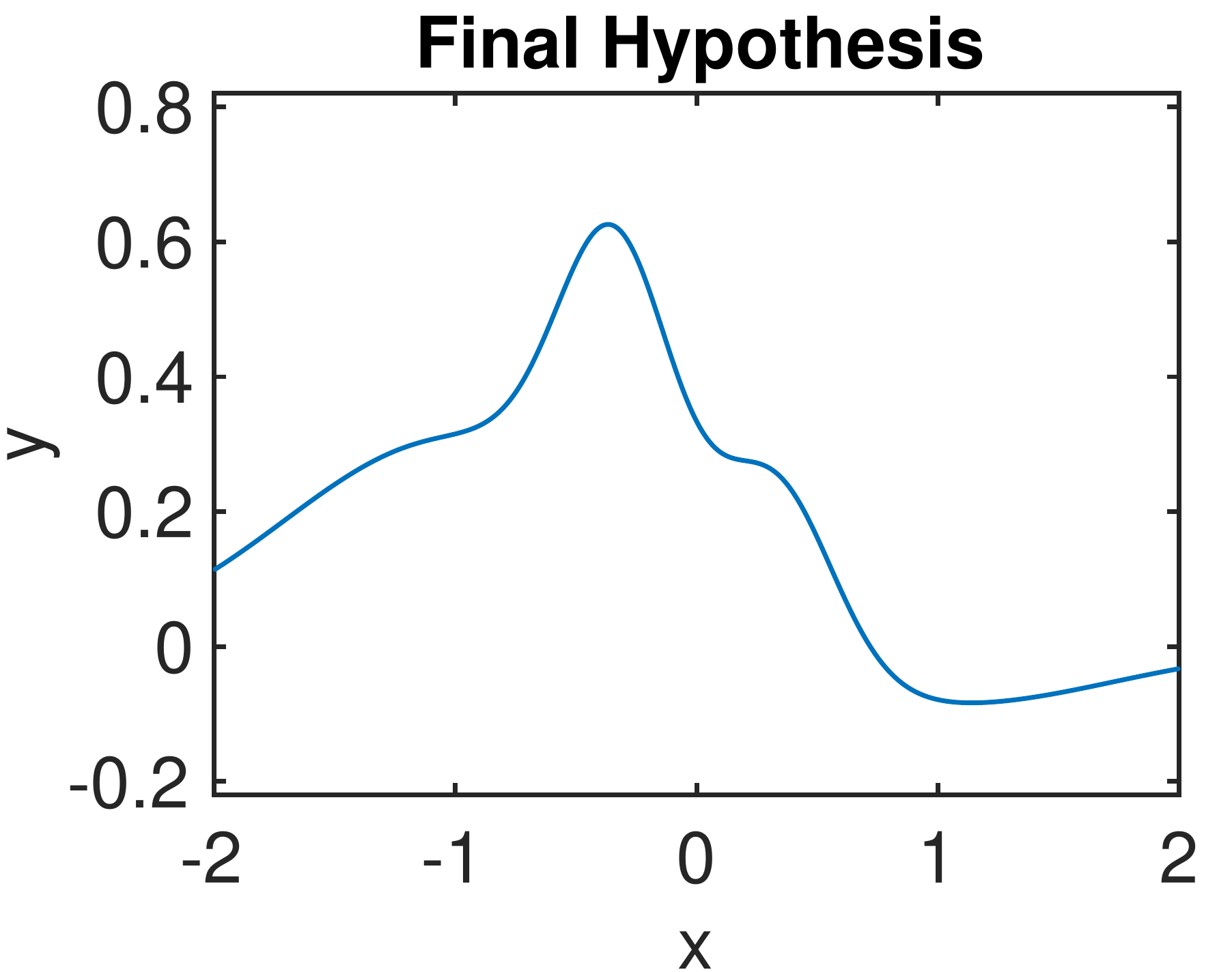}
		&\includegraphics[width=0.23\textwidth]{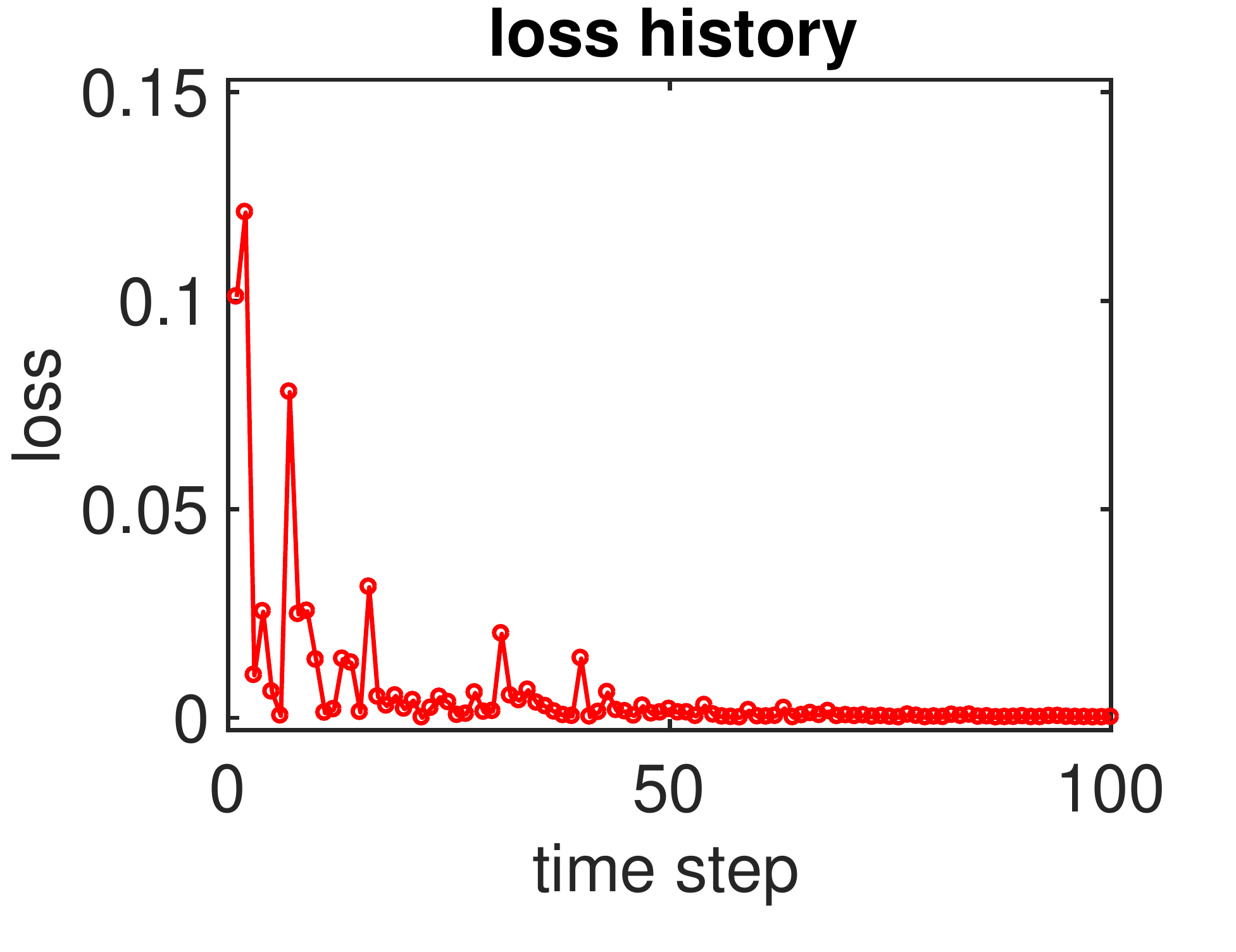}  \\
		(a)&(b)& (c) &(d)
	\end{tabular}
	\caption{(a): The ground truth $f$. (b): The hypothesis $\predf(\cdot,\param_5)$ used for prediction after 4 learning iterations. (c): The final hypothesis $  \predf(\cdot,\param_{100})$. (d): The evolution of the prediction losses $\ell_t(\param_t)$ for $t=1,...,100$.}
\end{figure*}

Some simulation results are depicted in Fig. \ref{fig:onlineregrrbfn}. Note, at the final recorded time step $T= 100$, the hypothesis $\predf(\cdot;\param_t)$ closely matched the ground truth $f(\cdot)$ (cf.  Fig. \ref{fig:onlineregrrbfn}.a and Fig. \ref{fig:onlineregrrbfn}.c ). Furthermore, we note that the prediction losses seemed to  vanish with increasing learning experience  (cf.  Fig. \ref{fig:onlineregrrbfn}.d). 
In the next subsection, we will investigate to what extent such a behavior can be predicted on the basis of the no-regret properties of the learning algorithm we utilised.

\subsection{Prediction loss convergence with increasing permanence} Refer to Lem. \ref{lem:sumStconvtpnewconv}. Its implications for no-regret learning in OPs are clear: while, without further assumptions, from the fact that $\frac 1 T \sum_{t=1}^ T\ell_t(\param_t) \to 0$ alone, we cannot infer that the prediction loss will converge to zero in the traditional sense, we can guarantee convergence with increasing permanence:

\begin{theorem}\label{thm:norawipnoerror}
	Consider the online learning and prediction problem in an OP setting (as for instance considered above) where $\ell_t(\param_t) \geq 0$ is the stage prediction error of the prediction model with parameter $\param_t$ and where the prediction model class is sufficiently expressive to guarantee that $\min_\param \ell_t(\param) =0, \forall t$ (which is the case e.g. when $\hypspace \supseteq \mathcal F$).
	
	If the $\param_t$ are updated with a no-regret algorithm suitable for the given OP then the prediction errors vanish with increasing permanence. That is we have:  $$\ell_t(\param_t) \stackrel{}{ \cwip 0} \text{  (as   } t \to \infty).$$
	\begin{proof}
		Let $\param^* \in \paramspace$ such that $\predfn(\cdot;\param^*) = f(\cdot)$. Thus, $\ell_t(\param^*) =\min_{\param \in \paramspace} \ell_t(\param) =0$.
		Considering that $\ell_t(\param_t) \geq 0 \forall t$, the no-regret guarantee ensures 
		$$\frac{R(T)}{T} = \frac{1} T \sum_{t=1}^T \ell_t(\param_t) - \frac 1 T \sum_{t=1}^T \ell_t(\param^* ) =   \frac{1} T \sum_{t=1}^T \ell_t(\param_t) \stackrel{T\to \infty}{  \longrightarrow 0}.$$ Appealing to Lem. \ref{lem:sumStconvtpnewconv} gives the desired result.
	\end{proof}
\end{theorem}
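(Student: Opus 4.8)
The plan is to reduce the claim to a single invocation of Lemma \ref{lem:sumStconvtpnewconv} by establishing that the online prediction losses $\ell_t(\param_t)$ have vanishing Cesàro average. The conceptual bridge from the no-regret hypothesis to that averaging statement is the observation that, under the stated expressiveness assumption, the comparator term in the regret is identically zero, so that the regret $\mathcal{R}(T)$ coincides with the cumulative online loss $\sum_{t=1}^T \ell_t(\param_t)$.

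Concretely, I would first fix a parameter $\param^* \in \paramspace$ with $\predf(\cdot;\param^*) = f(\cdot)$; such a $\param^*$ exists whenever $\hypspace \supseteq \mathcal F$. By the defining property of the loss (non-negative, and equal to zero exactly when $\predf(x;\param) = f(x)$) this gives $\ell_t(\param^*) = 0$ for every $t$, hence $\sum_{t=1}^T \ell_t(\param^*) = 0$. Since $\param^*_T \in \argmin_\param \sum_{t=1}^T \ell_t(\param)$ and all stage losses are non-negative, the comparator cumulative cost is squeezed: $0 \leq \sum_{t=1}^T \ell_t(\param^*_T) \leq \sum_{t=1}^T \ell_t(\param^*) = 0$, so it vanishes, and therefore $\mathcal{R}(T) = \sum_{t=1}^T \ell_t(\param_t)$ for all $T$. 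Next I would apply the no-regret property: because $\mathcal{R}(T) = \sum_{t=1}^T \ell_t(\param_t) \geq 0$, the guarantee $\limsup_{T\to\infty}\max(0,\mathcal{R}(T))/T = 0$ (equivalently $\mathcal{R}(T) \leq \Delta(T)$ with $\Delta \in o(T)$) immediately yields $S_T := \frac{1}{T}\sum_{t=1}^T \ell_t(\param_t) \stackrel{T\to\infty}{\to} 0$. Finally, setting $s_t := \ell_t(\param_t) \geq 0$, the sequence $(s_t)$ satisfies the hypotheses of Lemma \ref{lem:sumStconvtpnewconv}, whence $s_t \cwipt 0$, which is the assertion.

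The argument is short, and the only genuinely load-bearing step is the collapse of the comparator term to zero — this is precisely where the expressiveness assumption does its work. The subtlety I would be careful about is that what one actually needs is the existence of a \emph{single} parameter $\param^*$ minimising \emph{every} stage loss simultaneously (guaranteed by $\hypspace \supseteq \mathcal F$ since then $f \in \mathcal F$ is itself a hypothesis), rather than merely the pointwise statement that $\min_\param \ell_t(\param) = 0$ for each $t$: with stage-dependent minimisers the bound $\sum_{t=1}^T \ell_t(\param^*_T) \leq 0$ need not hold. Everything else is routine bookkeeping — the passage from the regret bound to the averaged convergence is immediate once non-negativity is available, and the concluding i.p.-convergence statement is exactly Lemma \ref{lem:sumStconvtpnewconv}, which may be assumed.
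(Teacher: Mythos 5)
Your proposal is correct and follows essentially the same route as the paper's own proof: fix a single $\param^*$ with $\predf(\cdot;\param^*)=f(\cdot)$ so the comparator term vanishes, deduce $\frac{1}{T}\sum_{t=1}^T \ell_t(\param_t)\to 0$ from the sublinear regret bound and non-negativity, and conclude via Lemma \ref{lem:sumStconvtpnewconv}. Your remark that one needs a single simultaneous minimiser rather than merely $\min_\param \ell_t(\param)=0$ for each $t$ separately is a fair observation about how the hypothesis should be read, but it matches what the paper's proof actually uses.
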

Note, the theorem applies to the online prediction setup of the previous section, guaranteeing that GP learning of the RBFNNs results in prediction errors that converge to zero with increasing permanence.

The theorem above is valid irrespective of the nature of the time series. However, the assumption that the target f is contained in the hypothesis space is a limitation. In practice, we might have the situation where the hypothesis space and the target class $\mathcal F$ are distinct with a distance given by some \emph{representational model class error} $r =  \sup_{f \in \mathcal F} \inf_{\predf \in \hypspace} \metric(f,\predf)$ for some suitable metric $\metric$ whose choice depends on the concrete online learning problem. In what is to follow we extend our convergence guarantees to two such learning problems in the presence of representational error.
\subsubsection{Online regression with i.i.d. inputs and representational mean-square model error}
Consider online regression. In this standard learning scenario, the inputs are assumed to be drawn i.i.d. from a distribution with density $p$ with support $\inspace$. 
We assume at each stage $t$, the new predictor's parameter $\param_t$ is to be picked based on the history $\mathbb I_t$ of past observations of input -output pairs $(x_i,y_i)$ (or just past prediction losses $\ell_i(\param_i)) \, (i <t)$. After this, $(x_t,y_t)$ and the prediction loss $ \ell(x_t;\param_t)$ can be computed (alternatively, this new loss is revealed) which concludes the stage.  Typically, one considers mean-square regression with a loss $\ell(x;\theta) = \norm{ \predf(x;\theta) - f(x) }^2$.
Let \expect{\cdot} denote the expectation operator.
The expectation $\expect{\ell(x;\theta)}$  is the standard stochastic mean-square loss. Relative to this loss, we can define the model error $r_f =  \inf_{\predf \in \hypspace} \metric(f,\predf)$ for a given function $f \in \mathcal F$ where $\metric(f,g) := \expect{ \norm{f-g}_2^2 }$.  Furthermore, we can consider the worst-case model class error $r_2 := \sup_{f \in \mathcal F} r_f$.
Our theory developed so far assures us that the online mean-square prediction losses i.p.-converge to a set that is below this representational model error: 

\begin{theorem} \label{thm:l2onlinewipconsistency}
	Assume the online regression task is performed by a prediction algorithm that suffers sub-linear external regret, i.e. where the $\param_t$ are chosen such that $\exists \Delta \in o(T) \forall T \in \nat : \sum_{t=1}^T  \ell(x_t;\param_t) \leq \inf_{\param \in \paramspace} \sum_{t=1}^T \ell(x_t;\param) + \Delta(T)$. Then the sequence $(\expect{ \ell(x_t;\param_t)}  )_{t  \in \nat}$ of expected prediction losses converges to at most the representational error with i.p., that is:  $$\expect{ \ell(x_t;\param_t)} \stackrel{t \to \infty}{\cwip}r_f \in  [0,r_2].$$
\end{theorem}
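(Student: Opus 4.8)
The plan is to reduce the claim to Lemma~\ref{lem:sumStconvtpnewconv}. To invoke that lemma I need a non-negative sequence whose Cesàro averages vanish; the natural candidate is $s_t := \expect{\ell(x_t;\param_t)} - r_f$. So the proof breaks into two estimates: a pointwise lower bound $\expect{\ell(x_t;\param_t)} \ge r_f$ (which makes $s_t \ge 0$), and an averaged upper bound $\frac{1}{T}\sum_{t=1}^T\expect{\ell(x_t;\param_t)} \le r_f + \Delta(T)/T$ coming from the regret guarantee. Throughout I assume the mild integrability condition $r_f < \infty$, i.e.\ that $f$ admits at least one mean-square finite approximant in $\hypspace$.

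For the lower bound I would use the information structure of the online regression protocol: $\param_t$ is a function of $\mathbb I_t$, which contains only the past pairs $(x_i,y_i)$ with $i<t$ and, in particular, not $x_t$; since the inputs are i.i.d., $\param_t$ is therefore independent of $x_t$. Recalling $\ell(x;\param)=\norm{\predf(x;\param)-f(x)}^2$ and $\metric(f,g)=\expect{\norm{f-g}_2^2}$ (expectation over the input law) and conditioning on $\param_t$ gives $\cexpect{\ell(x_t;\param_t)}{\param_t}=\metric\bigl(f,\predf(\cdot;\param_t)\bigr)$, whence $\expect{\ell(x_t;\param_t)}=\expect{\metric\bigl(f,\predf(\cdot;\param_t)\bigr)}\ge\inf_{\predf\in\hypspace}\metric(f,\predf)=r_f$, using that $\predf(\cdot;\param_t)\in\hypspace$ for every $\param_t\in\paramspace$.

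For the averaged upper bound I would take expectations in the assumed sub-linear regret inequality. The only step that needs care is bounding $\expect{\inf_{\param}\sum_{t=1}^T\ell(x_t;\param)}$: for each fixed $\param$ we have the pointwise inequality $\inf_{\param'}\sum_{t=1}^T\ell(x_t;\param')\le\sum_{t=1}^T\ell(x_t;\param)$, so taking expectations and then the infimum over $\param$ yields $\expect{\inf_{\param'}\sum_{t=1}^T\ell(x_t;\param')}\le\inf_{\param}\sum_{t=1}^T\expect{\ell(x_t;\param)}=T\inf_{\param}\metric\bigl(f,\predf(\cdot;\param)\bigr)=Tr_f$, where I used that the $x_t$ are identically distributed. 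Combined with the regret bound this gives $\frac{1}{T}\sum_{t=1}^T\expect{\ell(x_t;\param_t)}\le r_f+\Delta(T)/T$, i.e.\ $\frac{1}{T}\sum_{t=1}^T s_t\le\Delta(T)/T$. Since $s_t\ge 0$ and $\Delta\in o(T)$, the averages $\frac{1}{T}\sum_{t=1}^T s_t$ are squeezed between $0$ and $\Delta(T)/T$, which vanishes, so $\frac{1}{T}\sum_{t=1}^T s_t\to 0$. Lemma~\ref{lem:sumStconvtpnewconv} then gives $s_t\cwip 0$, i.e.\ $\expect{\ell(x_t;\param_t)}\cwip r_f$, and $r_f\in[0,r_2]$ is immediate from $r_2=\sup_{f\in\mathcal F}r_f$ and $r_f\ge 0$.

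I expect the main obstacle to be the lower bound $\expect{\ell(x_t;\param_t)}\ge r_f$: it is what makes $(s_t)$ non-negative and hence Lemma~\ref{lem:sumStconvtpnewconv} applicable, and it rests essentially on $\param_t$ being committed before $x_t$ is revealed --- if $x_t$ were allowed into $\mathbb I_t$ the conditional-expectation identity, and the theorem, would fail. The exchange of infimum and expectation in the averaged bound goes in the favourable direction and is elementary, and everything after the Cesàro estimate is a direct appeal to Lemma~\ref{lem:sumStconvtpnewconv}.
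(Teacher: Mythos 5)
Your proof is correct and shares the paper's overall architecture: the same non-negative sequence $s_t=\expect{\ell(x_t;\param_t)}-r_f$, the same two-estimate decomposition (a pointwise lower bound plus an averaged upper bound), and the same final appeal to Lemma~\ref{lem:sumStconvtpnewconv}. Your lower bound is exactly the paper's Lemma~\ref{lem:mswipconv2}, proved the same way via independence of $x_t$ from $\param_t$ (and you are right that this is the step that would break if $x_t$ were admitted into $\mathbb I_t$). Where you genuinely diverge is the averaged upper bound. The paper's Lemma~\ref{lem:mswipconv1} controls $\inf_\param \frac1T\sum_{t}\ell(x_t;\param)$ by invoking the strong law of large numbers to get $\inf_\param\psi_T(\param)\to r_f$ almost surely and then passes to a dominating random sequence $V_T$ with $\expect{V_T}\to 0$; you instead take expectations directly in the regret inequality and use the elementary one-sided exchange $\expect{\inf_{\param}\sum_t\ell(x_t;\param)}\le\inf_\param\sum_t\expect{\ell(x_t;\param)}=Tr_f$. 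Your route is more elementary and arguably cleaner: it sidesteps both the interchange of an infimum over an uncountable parameter set with an almost-sure limit, and the passage from $V_T\to 0$ a.s.\ to $\expect{V_T}\to 0$, which as written in the paper would need a domination or uniform-integrability argument that is not supplied. Since $\Delta(T)/T\to 0$ deterministically, you reach the same Cesàro bound $0\le\frac1T\sum_t s_t\le\Delta(T)/T\to 0$ and hence the same conclusion; the only caveats you inherit (measurability of the pathwise infimum, $r_f<\infty$) are ones the paper's argument also assumes implicitly.
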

Applied to our example of RBFN-based online regression, the theorem states that the mean-square prediction error of the predictors that are found online i.p. -converges to the best mean-square representational error attainable by the presupposed RBFN structure.

\section{No-Regret Learning-Based Model-Reference Adaptive Control } \label{sec:mracappl}
As mentioned above, our results are meaningful in online-learning based model-reference adaptive control.
Consider a dynamical system $\ddot x = f(x) + a(x) + b(x) u$  where $x$ denotes the state and $u$ denotes the control action. We assume that $a$ and $b$ are known a priori and that the inverse $b^{-1}$ can be computed for all states $x$. By contrast, $f$ is uncertain and capturing model discrepancies due to environmental conditions of the system the plant operates i,  which are hard to model a priori.
If  $f$ was perfectly known, a standard approach would be to feedback linearise the system, setting $u(x):= b^{-1}(x) (- a(x)-f(x) + \ddot x_{ref}) $ where $\ddot x_{ref}$ is some reference behaviour we desire the closed-loop dynamics to exhibit. For example, in tracking where we desire $x_t$ to follow a target trajectory $\xi_t$, one approach would be to set $\ddot x_{ref}= K (\xi-x)$ where $K$ a stabilising feedback matrix ensuring that $\xi_t$ will eventually be tracked by the reference state trajectory $x_{ref}$ with sufficient accuracy. 
In the absence of perfect knowledge of $f$ we can replace $f(x_t)$ by a predictor $\predf(x_t;\param_t)$ in  the feedback-linearising law $u(x_t)$ and to learn the parameters of the predictor online. There are many learning methods we can employ to this end, including  updates of neural network weights \cite{Anderson2009} or nonparametric learning methods such as Gaussian processes \cite{ChowdharyCDC2013}.  In this work, we propose to learn this predictor online with an OP-based online regression method as describedin  in Sec. \ref{sec:OCP4ORegr}.

It can be shown, that when defining the error  by $e = x_{ref} - x$, in a first-order Euler-discretised version, the error dynamics become (see e.g. \cite{calliess2014_thesis}):
$$e_{t+1} = M e_t + d_t.$$ Here, $M$ is a stable matrix, disturbance $d_t = \tinc f(x_t) - \tinc \predf(x_t; \param_t)$ is the prediction error and $\tinc$ is the time increment from the time-discretisation.
Assume we employ online regression to online-learn $\predf$ (by adapting the parameters $\param_t$) with a sublinear regret incurring algorithm OP algorithm. In that case, with some additional assumptions, we can guarantee that the error trajectory vanishes with increasing permanence.
\begin{theorem}\label{thm:noractrlerrcwip}
Suppose that (i) $f$ resides in the model class $\hat \fctspace$ and that (ii) $\sup_{\predf \in \hat \fctspace} \norm{f-\predf}_\infty$ is bounded. If the parameters $\theta_t$ are incrementally updated with an algorithm that incurs sublinear external regret and receives loss feedback $\ell_t(\theta_t) = \norm{d_t}_2^2$ then the error trajectory vanishes with increasing permanence, i.e. $e_t \cwip 0$.
\begin{proof} 
Owing to (i) and the no-regret assumption Thm. \ref{thm:norawipnoerror} is applicable, which entails that $d_t \cwip 0$. In conjunction with (ii), this allows us to appeal to Thm. \ref{thm:stablewipperturbedlindyn_main} which gives the desired statement.
\end{proof}
\end{theorem}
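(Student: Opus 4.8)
The plan is to realise $e_t\cwip 0$ as a two-stage consequence of results already in hand, reading the prediction error $d_t = \tinc f(x_t) - \tinc\predf(x_t;\param_t)$ as the disturbance that drives the linear error recurrence $e_{t+1}=Me_t+d_t$. First I would unpack the loss feedback: since the algorithm is fed $\ell_t(\param) = \norm{\tinc f(x_t) - \tinc\predf(x_t;\param)}_2^2 = \tinc^2\norm{f(x_t)-\predf(x_t;\param)}_2^2 \geq 0$, assumption~(i) ($f\in\hat \fctspace$) supplies a parameter $\param^*$ with $\predf(\cdot;\param^*)=f(\cdot)$, and hence $\ell_t(\param^*) = 0 = \min_\param\ell_t(\param)$ for every $t$. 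This is exactly the expressiveness hypothesis of Theorem~\ref{thm:norawipnoerror}, so together with the sublinear-regret assumption that theorem yields $\ell_t(\param_t)\cwip 0$. Since $\norm{d_t}_2 = \sqrt{\ell_t(\param_t)}$ and $s\mapsto\sqrt{s}$ is continuous at $0$ and vanishes there, and i.p.\ convergence is preserved under such maps (given $\eps'$, apply the i.p.\ definition of $\ell_t(\param_t)\cwip 0$ with tolerance $\eps'^2$), I would conclude $\norm{d_t}_2\cwip 0$.

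Second I would verify the remaining hypothesis of Theorem~\ref{thm:stablewipperturbedlindyn_main}, namely boundedness of the disturbance sequence. This is precisely where assumption~(ii) is used: because $\predf(\cdot;\param_t)\in\hat \fctspace$, we have $\norm{d_t}_2 = \tinc\norm{f(x_t)-\predf(x_t;\param_t)}_2 \leq \tinc\,\sup_{\predf\in\hat \fctspace}\norm{f-\predf}_\infty =: b <\infty$, uniformly in $t$. Since $M$ is Schur, $\specrad(M)<1$, so the series $\sigma = \sum_{i=0}^\infty\matnorm{M^i}$ converges, and all hypotheses of Theorem~\ref{thm:stablewipperturbedlindyn_main} are met with the disturbances vanishing with i.p.

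The one genuinely delicate point is that Theorem~\ref{thm:stablewipperturbedlindyn_main} is stated for a strictly positive tolerance $r>0$, whereas here $\norm{d_t}_2$ vanishes with i.p.\ in the sharp sense. I would resolve this by applying the theorem with an arbitrary $r>0$ — legitimate since $\norm{d_t}_2\cwip 0$ implies $\norm{d_t}_2\cwip[0,r]$ — to get $\norm{e_t}\cwip[0,\sigma r]$ for every $r>0$, and then observe that i.p.\ convergence to $[0,\sigma r]$ for all $r>0$ forces $\norm{e_t}\cwip 0$: given $\eps,D,N$, take $r=\eps/(2\sigma)$ and feed the i.p.\ definition of $\max(0,\norm{e_t}-\sigma r)\cwip 0$ the tolerance $\eps/2$, obtaining $\norm{e_{n+i}}\leq\eps$ on a window of length $D$. (Equivalently, one may invoke Theorem~\ref{thm:cwipcontrdynsys_main} directly under a norm $\metricp$ uniformly equivalent to the Euclidean one for which $M$ is a true contraction, since $\norm{d_t}_2\cwip 0$ transfers across uniformly equivalent metrics.) I do not expect a real obstacle here: the substantive work has already been discharged inside Theorems~\ref{thm:norawipnoerror} and~\ref{thm:stablewipperturbedlindyn_main}, and what remains is this boundary-case bookkeeping plus checking that the loss feedback and assumptions~(i)–(ii) line up with those theorems' hypotheses.
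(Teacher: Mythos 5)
Your proposal is correct and follows essentially the same route as the paper's own (very terse) proof: invoke Thm.~\ref{thm:norawipnoerror} via assumption (i) to get the disturbances vanishing with i.p., then feed this and the boundedness from (ii) into Thm.~\ref{thm:stablewipperturbedlindyn_main}. You additionally supply two details the paper's proof elides --- the passage from $\ell_t(\param_t)\cwip 0$ to $\norm{d_t}_2\cwip 0$ via continuity of the square root at zero, and the boundary case $r\to 0$ in Thm.~\ref{thm:stablewipperturbedlindyn_main}, which is stated only for $r>0$ --- both of which are handled correctly.
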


\subsection{Example -- pendulum control}
To illustrate the viability of no-regret learning based control on a simple example we, we consider the following pendulum control problem:

We explored our method's properties in simulations of a rigid pendulum with (a priori known) drift $a(x) :=  - \frac{g}{l} \sin (x_1) - \frac{r(x_1)}{m l^2} x_2$ and constant control input function $b(x) = \frac{1}{m \, l^2}$.  Here, $x_1 = q, x_2 = \dot q \in \Real$ are joint angle position and velocity, $r$ denotes a friction coefficient, $g$ is acceleration due to gravity $l$ is the length and $m$ the mass of the pendulum.  The control input $u \in \Real$ applied a torque to the joint that corresponds to joint-angle acceleration. With $q$ denoting the joint angle, $q=0$ pertains to a state where the pendulum is pointing downward and $q=\pi$ denotes a position in which the pendulum is upward. Given an initial configuration $x_0 = [0;0]$, we desired to steer the state to a terminal configuration $\xi = [\pi, 0]$. We applied a feedback linearising control law corresponding to the continuous-time law $u(x_t;\param_t) := b^{-1}(x_t) (-a(x_t) -\predf(x_t;\param_t)  - K (\xi - x_t) )$. Here $K$ was (an underdamping) PD-controller feedback matrix ensuring global asymptotic stability of $\xi$ in the perfectly feedback linearised reference system $ \ddot x_{ref} = K(\xi-x_{ref})$. To connect to our theory, we discretised the system and control laws by a first-order Euler approximation. We documented the behavior of three different settings: (i) Where $f(\cdot) = \predf(\cdot) =0$, corresponding to a situation where there is no model error. (ii) Where $f(x) = \sum_{i=1}^4 w_i^* \exp(\abs{x_1-c_i}/\sigma_i)$ with $w^*= [-12,-10,10,12],c= [-\pi/2,0,\pi/2,\pi], 
\sigma = [1,1,.5,.5]$ being a mixture of Gaussians while the controller falsely modeled this function by the static predictor $\predf \equiv 0$; (iii) Where $f$ was as before but, at each time t, the predictor was chosen to be $\predf(\cdot;\param_t) = \sum_{i=1}^4 \param_{t,i} \exp(\abs{x_1-c_i}/\sigma_i)$ and parameter $\param_t$ updated with Greedy Projection\cite{zinkevich:icml2003}. The initial parameter was $\param_0= [0,0,0,0]$.

As learning feedback prediction loss $\ell_t(\param_t) = \norm{ \predf(x_t;\param_t) - \bigl ( \ddot x_t -a(x_t) -b(x_t) u(x_t;\param_t) \bigr )}_2^2$ was fed back to the no-regret algorithm after each discrete time step  $t$.

According to our theory the reference error $e_t$ should vanish (at least with increasing permanence).
Simulation results are depicted in Fig. \ref{fig:1pendctrln}. The documented behaviour is consistent with our theoretical guarantees. Note, that the linearising controller without the adaptive element that operates on the basis of the static inaccurate model led to poor performance. By contrast, when the controller was set up with no-regret learning in place as described above (and matching the assumptions of Thm. \ref{thm:l2onlinewipconsistency}) seamlessly managed to control the state to the target.
\begin{figure*}[t]
	\label{fig:1pendctrln}
	\centering
	\begin{tabular}{ccc}
		\includegraphics[height= .2\textwidth, width=0.31\textwidth]{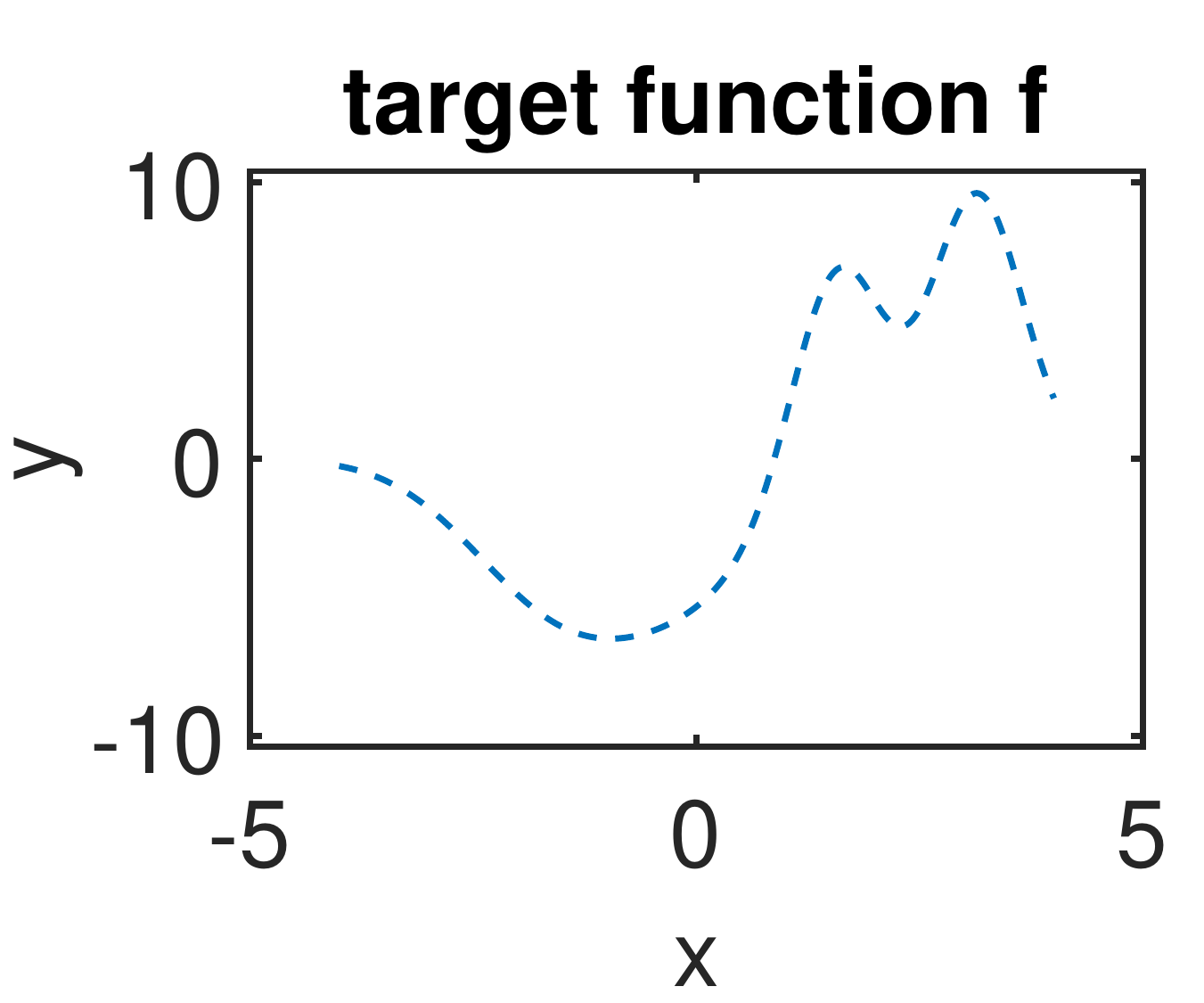}
		&\includegraphics[height= .2\textwidth, width=0.31\textwidth]{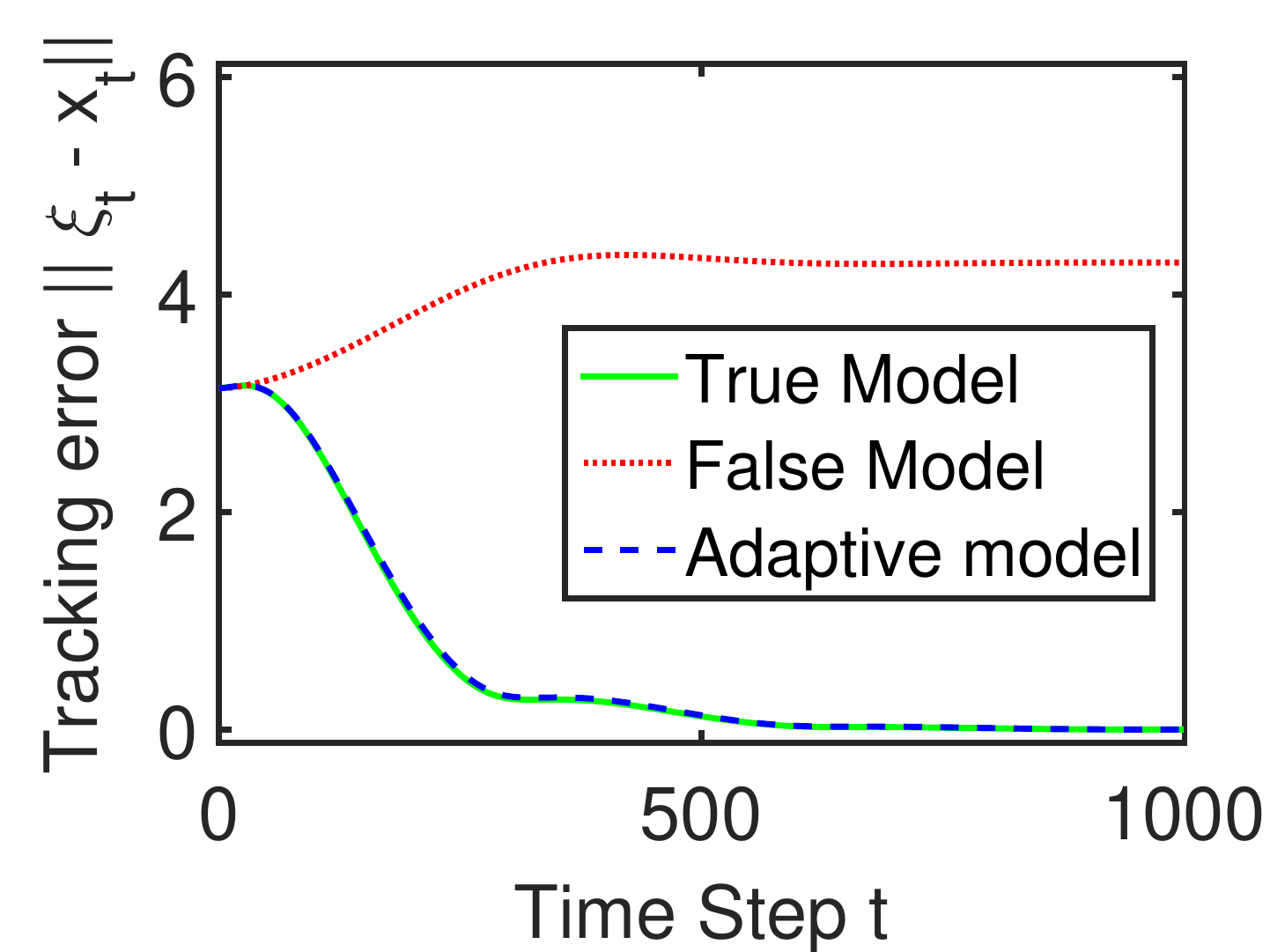} 
		&\includegraphics[height= .2\textwidth, width=0.31\textwidth]{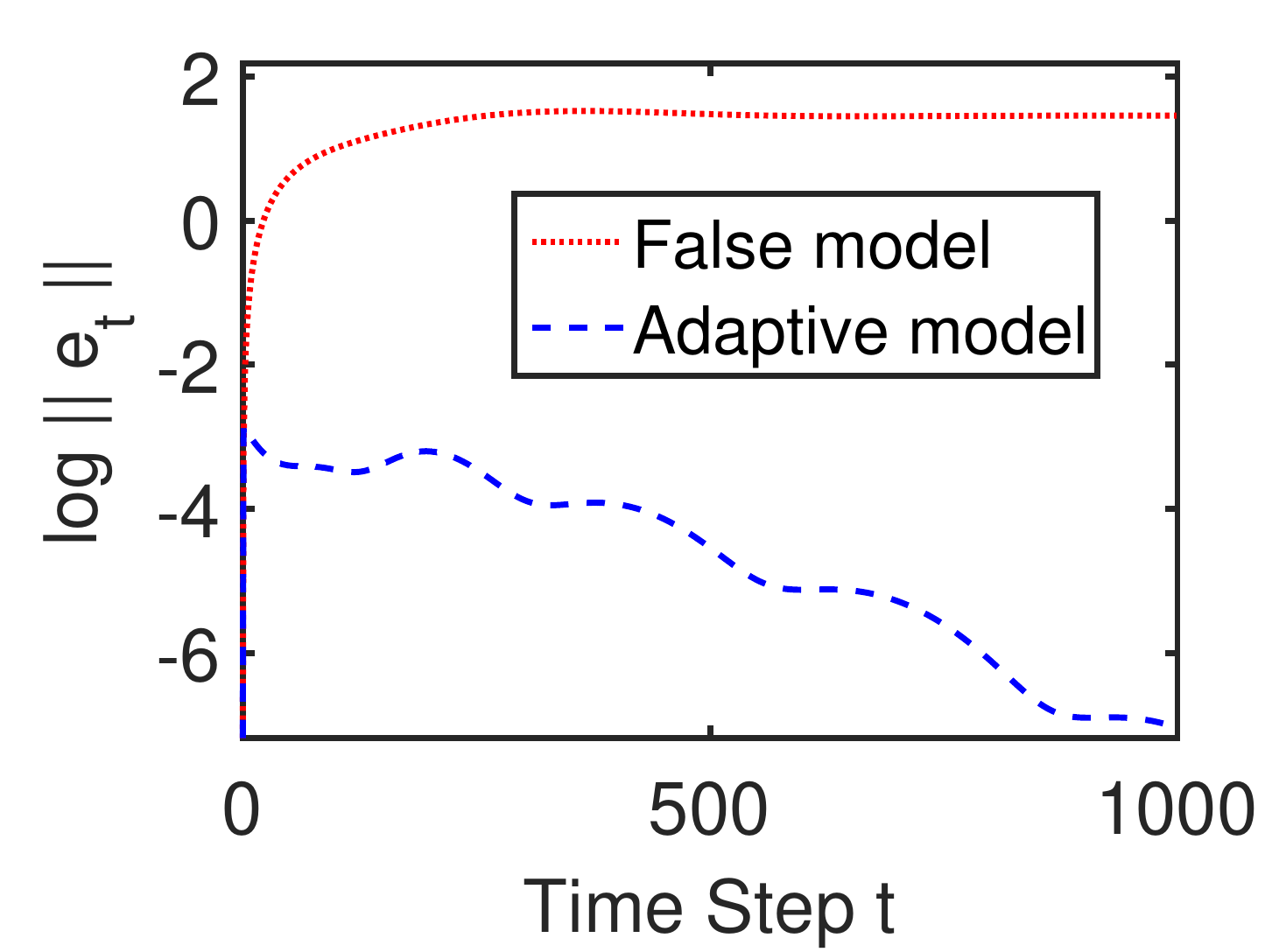}\\
		(a)&(b)& (c)
	\end{tabular}
	\caption{ Pendulum control task. (a): The ground truth model error $f$. (b): Recorded tracking errors incurred by linearsing controllers with knowledge of the true $f$ (green plot), under the false assumption that $f\equiv \predf \equiv 0$ (red plot) and when utilising our no-regret learning based prediction model $\predf(x_t;\param_t)$ with the parameters updated with GP (blue plot). Note, how falsely fixing  $\predf \equiv 0$ caused the second controller to completely fail to track the target. By contrast, our learning-based controller adapted quickly and learned to accurately track the target. (c): Logarithm of the norm of the error dynamics (deviation from the reference) for the linearising controller based on the false model (red plot) and for our learning based approach (blue plot).}
\end{figure*} 

\vspace{-.1em}
\section{Conclusions}
This paper discussed early work considering the application of online programming (in particular, of no-regret algorithms) in online learning-based control settings. 
As a first proposal of how to think about the implications of no-regret guarantees in control, we introduced the concept of convergence with \emph{increasing permanence (i.p.)} as an objective for online learning and control. We discussed conditions under which such  objectives are met when no-regret algorithms are employed in online regression and learning-based model-reference adaptive control. We have given illustrations of such applications of no-regret algorithms to online regression and control tasks. 

Being work at an early stage, many possible directions of further enquiry remain. 
Firstly, i.p.-convergence appears to be a relatively weak notion that may not be satisfactory to be assured of in many control settings. Future work could investigate how regret-bounds can lead to refined convergence rates and lead to stronger guarantees if additional properties are known such as bounded excitation (i.e. target function values), Lipschitz constants and convexity properties.
Secondly, on the practical side, we will devise nonparametric extensions of no-regret learning based controllers that would be able to learn and control dynamics more flexibly. In addition, 
we will investigate of how to systematically deal with observational noise and representational model error both from a practical and a theoretical perspective. 
For simplicity, our exposition was limited to the case of noise-free observations (translating to noise-free loss observations). Fortunately however, there exist a variety of online convex programming algorithms that address noisy losses  \cite{CesaBianchi2010,Belmega2018,Argawal2013,Anava2013} the offer expected regret bounds that can be converted into high-probability bounds. If we applied those, all our results presented in this paper would hold with arbitrarily high probability. 
Furthermore, we would find it interesting to investigate applications to model-predictive control.

In summary, this paper presented a first step towards drawing a bridge between control and the online optimisation community in theoretical machine learning. While many open theoretical challenges remain and details will have to be worked on in the future, we hope to have provided first evidence that utilisation of no-regret learning techniques might be a potentially fruitful direction in the design and analysis of new online-learning based controllers. 

%
\bibliographystyle{plain}
\small
\bibliography{lit}

\section{Supplementary material}

\subsection{Derivation of Lemma \ref{lem:sumStconvtpnewconv}}
\begin{lemma}[ Lem. \ref{lem:sumStconvtpnewconv}]
	Assume we are given a non-negative real-valued sequence $(s_t)_{t \in \nat}$ with \newline
	$ S_T := \frac{1}{T} \sum_{t=1}^T s_t \stackrel{T \to \infty}{\to} 0.$
	Then we have: $s_t \cwipt 0$.
	
	\begin{proof}
		Assume $S_T := \frac{1}{T} \sum_{t=1}^T s_t \stackrel{T \to \infty}{\to} 0$, i.e.
		\begin{equation} \label{eq:oirfhkdjvn}
		\forall e>0 \exists m_e  \forall m\geq m_e : S_m \leq e.
		\end{equation}
		For contradiction assume $\neg(s_t \cwipt 0)$. This would imply the existence of some  $c \in \nat, \epsilon > 0,N \in 		\nat$ and  such that 
		\begin{equation}
		\forall n \geq N \exists i \in \{1,...,c\} : s_{n+i} > \epsilon.
		\end{equation}
		Let $q \in \argmin \bigl \{ m \in \nat \, | \, mc \geq \max(N,m_{\frac{\epsilon}{2c}}) \bigr\}.$ $\forall m >q $ we have:
		\begin{align}
			\frac \epsilon {2c}  \geq  S_{mc} &=  \frac{1}{mc} \sum_{t< qc} s_t + \frac{1}{mc} \sum_{t=q c}^{m c} s_t\geq\frac{1}{mc} \sum_{t=q c}^{m c} s_t  \\
			& \stackrel{(\ref{eq:oirfhkdjvn})} > \frac{(m-q) \epsilon}{mc}
		\end{align}
	which implies $\frac \epsilon {c} > 2(1- \frac q m) \frac \epsilon {c}$ for all $m > q$. In particular, the inequality would hold when choosing $m = 2q$. However, substituting in this choice yields the false inequality $\frac \epsilon {c} > \frac \epsilon {c}$.
	\end{proof}
\end{lemma}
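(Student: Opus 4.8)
The plan is to argue by contradiction, exploiting that i.p.-convergence to $0$ fails precisely when some window length and some error level cannot be met arbitrarily far out. So suppose $\neg(s_t \cwipt 0)$. Unpacking Definition~\ref{definition}, there exist $\epsilon > 0$, a window width $D \in \nat$, and a threshold $N \in \nat$ such that every block of $D$ consecutive indices beyond $N$ contains a ``bad'' index; that is, $\forall n \geq N\ \exists i \in \{1,\dots,D\} : s_{n+i} > \epsilon$.

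The next step is to convert this block-wise property into a lower bound on the Cesàro means. Apply the hypothesis successively with $n = N + (j-1)D$ for $j = 1, \dots, k$: each gives a bad index lying in the block $\{N+(j-1)D+1, \dots, N+jD\}$. These $k$ blocks are pairwise disjoint and all contained in $\{N+1,\dots,N+kD\}$, so there are at least $k$ distinct indices $t \le N+kD$ with $s_t > \epsilon$. Since $s_t \ge 0$ for all $t$, this yields $\sum_{t=1}^{N+kD} s_t \ge \sum_{t=N+1}^{N+kD} s_t > k\epsilon$, hence $S_{N+kD} = \frac{1}{N+kD}\sum_{t=1}^{N+kD} s_t > \frac{k\epsilon}{N+kD}$.

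Finally, letting $k \to \infty$, the right-hand side tends to $\epsilon/D > 0$, so $\limsup_{T\to\infty} S_T \ge \epsilon/D > 0$, contradicting the assumption $S_T \stackrel{T\to\infty}{\to} 0$. This contradiction establishes $s_t \cwipt 0$. (The author's own argument instead fixes a single large index $q$ and specialises to $m = 2q$ to derive the false inequality $\epsilon/D > \epsilon/D$; the block-counting version above makes the same mechanism more transparent.)

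I do not expect any genuine obstacle here: the content is elementary. The one place to be careful is the bookkeeping when negating i.p.-convergence — extracting the correct quantifier pattern, in particular that $N$ may be fixed once and for all and that the bad index lies in $\{n+1,\dots,n+D\}$ for \emph{every} $n \ge N$ — and then aligning the block decomposition with that pattern so that the count of bad indices in $\{N+1,\dots,N+kD\}$ is genuinely at least $k$ (an off-by-one in the block boundaries is the only realistic way this could go wrong).
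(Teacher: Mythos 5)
Your proposal is correct and takes essentially the same route as the paper: both argue by contradiction, extract the quantifier pattern $\exists \epsilon, D, N\ \forall n\ge N\ \exists i\le D: s_{n+i}>\epsilon$, and count at least one bad index per length-$D$ block to lower-bound the Ces\`aro mean. The only difference is cosmetic: you let the number of blocks $k\to\infty$ to get $\limsup_T S_T \ge \epsilon/D>0$, whereas the paper fixes a block index $q$ beyond both $N$ and the $\epsilon/(2c)$-threshold and specialises to $m=2q$ to reach the false inequality $\epsilon/c>\epsilon/c$.
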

%
\vspace{-1em}

\subsection{Derivation of Thm. \ref{thm:cwipcontrdynsys_main}}
\begin{theorem} \label{thm:cwipcontrdynsys} [ Thm. \ref{thm:cwipcontrdynsys_main}] Let $(\inspace, \norm{\cdot})$ be a normed vector space and $\phi : \inspace \to \inspace$ be a contraction with fixed point $x_* \in \inspace$ and Lipschitz constant $\lambda <1$ relative to the metric canonically induced by norm $\norm{\cdot}$.   Let $(y_n)_{n \in \nat} , (d_n)_{n \in \nat}$  be sequences in $\inspace$ satisfying \begin{equation}\label{eq:contractdyn}
	y_{n+1}  = \phi( y_n) + d_n
\end{equation}
for all time steps $n \in \nat_0$. Assume bounded disturbances, i.e.  $\exists \maxerrn \in \Real \forall n :\norm{d_n} \leq \maxerrn$. Let $r \geq 0$. 

If  $\norm{d_n} \stackrel{n\to \infty}{\cwip} [0,r]$ then we have: $$\norm{ y_n - x_* } \stackrel{n\to \infty}{\cwip} \Bigl [0,  \frac{r}{1- \lambda} \Bigr ].$$

\begin{proof}
Let $\bar x_0 =: y_0$ and define $\bar x_n$ by the recurrence 
\begin{equation}
\label{eq:nominrecurrcontr}
\bar x_{n+1} = \phi(\bar x_n), \, \forall n \in \nat.
\end{equation}

Note, we can define 
$$\sigma := \sum_{i=0}^\infty \lambda^i = \lim_{n \to \infty}  \sum_{i=0}^{n-1} \lambda^{n-1-i} = \frac{1}{1-\lambda} $$
where we have applied the geometric series formula in the last step.

Assume $\norm{d_n} \stackrel{n\to \infty}{\cwip} [0,r]$.
 
 Let $\epsilon >0,D, N \in \nat$. We desire to show: 
	\begin{equation}
	\exists p \geq N \in \nat \forall i \in \{1,...,D\}:   \norm{y_{p+i } - x_*}  \leq \epsilon + \sigma r.
	\end{equation}
 
To this end, firstly, we note that due to convergence of $x_n$ to the fixed point $x_*$, we can find  $n_0$ such that 
\begin{equation} \label{statement:convfpx} \forall n \geq n_0: \norm{\bar x_{n} - x_*}  < \frac \epsilon 3. \end{equation}

Secondly, we note that, 
by induction, it is easy to show that for all $\ k,n \in \nat$, we have 
\begin{align}
 \norm{y_{k+n} - \bar x_{k+n}} & \leq \lambda^n \norm{\bar x_k-y_k} +\sum_{i=0}^{n-1} \lambda^{n-1-i} \norm{d_{k+i}} \\
 &\leq  \lambda^n \norm{\bar x_k-y_k} +\maxerrn_{k,n} \sigma \label{ineq:contrmaxerrnsigma}
\end{align}
where $\maxerrn_{k,n} := \max \{ \norm{d_k},\dots, \norm{d_{k+n-1}} \}$.
Since by assumption, the disturbances are bounded, say by $\maxerrn \in \Real$ the last inequality implies that in particular, 
$ \forall n: \norm{y_{n} - \bar x_{n}} \leq  \lambda^n \norm{\bar x_0-y_0} +\maxerrn \sigma \leq \norm{\bar x_0-y_0} +\maxerrn \sigma< \infty$. 
Hence, we can choose $q_0$ such that 
	\begin{equation}\label{ineq:q0lambdaj}
	\forall j \geq q_0, \forall n:  \lambda^j \norm{\bar x_{n}-y_{n}} \leq \frac{\epsilon}{3}.
	\end{equation}
	
Moreover, the assumption, $d_n \cwip [0,r]$ implies the existence of some $k_0 \geq N-1$ with
	\begin{equation}
	\forall j \in \{1,...,D +n_0 +q_0\}:   \norm{d_{k_0+j}}  \leq \frac{\epsilon}{3 \sigma} +r.
	\end{equation}
	and hence, 
	$\maxerrn_{k_0,n_0+D}	 \leq \frac{\epsilon}{3 \sigma} +r.$

Now, choose $ p:= k_0 +n_0+q_0$, $i \in \{1,\dots,D\}$ and let $j := p +i -k_0 =n_0+q_0+i $. 
We have: 
\begin{equation}\label{ineq:mxerrncomp}
\maxerrn_{k_0,j}\leq \maxerrn_{k_0,n_0+D+q_0}   \leq \frac{\epsilon}{3 \sigma} +r
\end{equation}
and furthermore, we have:

$\norm{y_{p+i} - x_*}  \leq  \norm{x_*- \bar x_{n_0+(k_0+q_0+i)}}+ \norm{y_{k_0+j} - \bar x_{k_0+j}}\\
\stackrel{(\ref{statement:convfpx})}{  \leq} \frac{\epsilon}{3} + \norm{y_{k_0+j} - \bar x_{k_0+j}}
\stackrel{(\ref{ineq:contrmaxerrnsigma})}{  \leq} \frac{\epsilon}{3} + \lambda^j \norm{\bar x_{k_0}-y_{k_0}} +  \maxerrn_{k_0,j} \sigma\\
\stackrel{(\ref{ineq:mxerrncomp} )}{\leq} \frac{\epsilon}{3} + \lambda^j \norm{\bar x_{k_0}-y_{k_0}} +  (\frac {\epsilon}{3 \sigma}+r) \sigma
\stackrel{(\ref{ineq:q0lambdaj})}{  \leq} \frac{\epsilon}{3} + \frac \epsilon 3 +  (\frac {\epsilon}{3 \sigma}+r) \sigma = \epsilon + \sigma r.$
\end{proof}
\end{theorem}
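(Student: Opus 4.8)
The plan is to compare the disturbed trajectory $(y_t)$ against the \emph{nominal} undisturbed trajectory obtained by iterating the contraction from the same starting point, and to control the deviation between them. First I would set $\bar x_0 := y_0$ and define the recurrence $\bar x_{n+1} := \phi(\bar x_n)$. Since $\phi$ is a contraction with Lipschitz constant $\lambda < 1$ and fixed point $x_*$, a one-line induction gives $\norm{\bar x_n - x_*} = \norm{\phi(\bar x_{n-1}) - \phi(x_*)} \leq \lambda^n \norm{y_0 - x_*}$, so $\bar x_n \to x_*$ geometrically. Hence for any tolerance I can fix an $n_0$ beyond which $\norm{\bar x_n - x_*} < \epsilon/3$.

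Next I would quantify how far the true trajectory drifts from the nominal one. Writing $y_{t+1} = \phi(y_t) + d_t$ and combining the contraction property with the triangle inequality, a straightforward induction on $n$ yields, for all $k,n \in \nat$,
\[
\norm{y_{k+n} - \bar x_{k+n}} \leq \lambda^{n} \norm{y_k - \bar x_k} + \sum_{i=0}^{n-1} \lambda^{n-1-i} \norm{d_{k+i}}.
\]
Bounding each disturbance term by the windowed maximum $\bar N_{k,n} := \max\{\norm{d_k},\dots,\norm{d_{k+n-1}}\}$ and summing the geometric series with $\sigma := \sum_{i=0}^\infty \lambda^i = \frac{1}{1-\lambda}$, the accumulated disturbance contribution is at most $\sigma\,\bar N_{k,n}$. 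Because the disturbances are assumed bounded, $\norm{y_n - \bar x_n}$ is uniformly bounded in $n$, so I can choose $q_0$ large enough that $\lambda^{j}\norm{y_n - \bar x_n} \leq \epsilon/3$ for every $j \geq q_0$ and every $n$; this erases the memory of the initial discrepancy after a warm-up of $q_0$ steps.

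The decisive step is to exploit the hypothesis $\norm{d_t} \cwip [0,r]$. Unwinding the i.p. definition (and using $\norm{d_t}\geq 0$, so that the distance from $[0,r]$ is $\max(0,\norm{d_t}-r)$), this says the disturbance norms remain below $\frac{\epsilon}{3\sigma}+r$ over arbitrarily long windows occurring arbitrarily late. Given $\epsilon, D, N$, I would invoke this for a window of length $D + n_0 + q_0$ starting at some $k_0 \geq N-1$, which forces $\bar N_{k_0,\,n_0+D} \leq \frac{\epsilon}{3\sigma}+r$ on the relevant range of indices. Setting the offset $p := k_0 + n_0 + q_0$ and splitting $\norm{y_{p+i}-x_*} \leq \norm{\bar x_{p+i}-x_*} + \norm{y_{p+i}-\bar x_{p+i}}$ for $i \in \{1,\dots,D\}$, the first term is $\leq \epsilon/3$ since $p+i \geq n_0$, while the second (via the bound above with $j := p+i-k_0 = n_0+q_0+i \geq q_0$) is at most $\epsilon/3 + \sigma(\frac{\epsilon}{3\sigma}+r) = \epsilon/3 + \sigma r$. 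Summing gives $\norm{y_{p+i}-x_*} \leq \epsilon + \frac{r}{1-\lambda}$, which is exactly i.p. convergence to $[0, r/(1-\lambda)]$.

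The main obstacle I anticipate is orchestrating the nested quantifiers. The i.p. definition only promises one good disturbance window somewhere beyond $N$, yet I need that window to be long enough to simultaneously (a) outlast the $q_0$-step warm-up over which the initial discrepancy decays and (b) reach into the regime $n \geq n_0$ where the nominal trajectory has already settled near $x_*$. Reserving an extra $n_0 + q_0$ steps of disturbance control \emph{ahead} of the target window of length $D$, and choosing the offset $p = k_0 + n_0 + q_0$ accordingly, is the bookkeeping that makes the three $\epsilon/3$ contributions line up; getting these window lengths and offsets consistent is the crux of the argument.
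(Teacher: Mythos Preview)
Your proposal is correct and follows essentially the same argument as the paper: the same nominal trajectory $\bar x_n$, the same inductive bound $\norm{y_{k+n}-\bar x_{k+n}} \leq \lambda^n\norm{y_k-\bar x_k} + \sigma\,\bar N_{k,n}$, the same choices of $n_0,q_0,k_0$ and offset $p=k_0+n_0+q_0$, and the same triangle-inequality split into three $\epsilon/3$ pieces. One tiny arithmetic slip: $\epsilon/3 + \sigma(\tfrac{\epsilon}{3\sigma}+r) = 2\epsilon/3 + \sigma r$, not $\epsilon/3+\sigma r$; with this correction the final sum $\epsilon/3 + 2\epsilon/3 + \sigma r = \epsilon + \sigma r$ comes out as you claim.
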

\vspace{-1em}
\subsection{Derivation of Thm. \ref{thm:stablewipperturbedlindyn_main}}
Let $\inspace$ denote state space endowed with a norm $\norm{\cdot}$. 
We consider the discrete-time dynamical system:
\[ x_{n+1} = M x_n + d_n \] 
where for time step $n \in \nat_0$, we refer to $d_n \in \inspace$ as a \emph{disturbance}. 

We assume that the disturbances are bounded and that $M$ is a stable matrix with spectral radius strictly less than 1, i.e $\specrad(M) < 1$ and $\forall n : \norm{d_n} <\maxerrn$  for some upper bound $\maxerrn$ on the disturbance. 
%
%
%
%
%
%
%
By induction, it is easy to show that for all $k \in \nat_0,n \in \nat$ we have $	\vc x_{k+n} =  M^n \, \vc x_{k} +  \sum_{i=0}^{n-1} M^{n-1-i} \, d_{i+k}$. 
%
%
Hence, 
\begin{align}
	\norm{x_{k+n}} &\leq   \matnorm{M^{n}} \, \norm{\vc x_k} +\sum_{i=0}^{n-1}  \matnorm{M^{n-1-i}} \, \norm{d_{i+k}}		\\
	&\leq \matnorm{M^{n}} \, \norm{\vc x_{k+n}} + \maxerrn_{k,n}	 \sum_{i=0}^{n-1}  \matnorm{M^{n-1-i}}\label{ineq:errnormbasic}
\end{align}
where $\matnorm{\cdot}$ denotes the spectral norm $\maxerrn_{k,n}	= \max_{i=0,...,n-1} \norm{d_{i+k}} \leq \maxerrn$.
Since $M$ is stable, the terms in (\ref{ineq:errnormbasic}) are bounded and convergent as $n \to \infty$ (see e.g. \cite{calliess2014_thesis}). In particular, with Gelfand's formula and the standard root test for series it is easy to establish convergence of the series: That is, there exists $\sigma \in \Real$ with $\lim_{k \to \infty} \sum_{i=0}^{k-1}  \matnorm{M^{k-1-i}} = \sum_{i=0}^{\infty}  \matnorm{M^{i}} =:\sigma$.\footnote{In \cite{calliess2014_thesis}, a practically computable upper bound on $\sigma$ can be found. } And, we have $ \sum_{i=0}^{n-1}  \matnorm{M^{n-1-i}} \leq \sigma, \forall n$.
Hence,
\begin{equation}
\norm{\vc x_{k+n}} \leq \matnorm{M^{n}} \, \norm{\vc x_k} + \sigma \, \maxerrn_{k,n}, \forall n \in \nat, k \in \nat_0.
\label{eq:errdyn320894}
\end{equation}
\begin{theorem}[Thm. \ref{thm:stablewipperturbedlindyn_main}]\label{thm:stablewipperturbedlindyn}
	If the sequence of disturbances vanishes with increasing permanence up to error $r>0$, i.e. $\norm{d_n} \cwip [0,r]$ then
	\[\norm{x_n} \stackrel{n \to \infty}{\cwip} [0,\sigma r]. \]
	
\end{theorem}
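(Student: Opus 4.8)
The plan is to mirror the argument used for Theorem~\ref{thm:cwipcontrdynsys_main}, but more directly, since here the powers $M^n$ already play the role the nominal contraction orbit played there. Everything rests on the estimate~(\ref{eq:errdyn320894}), namely $\norm{x_{k+n}} \leq \matnorm{M^{n}}\,\norm{x_k} + \sigma\,\maxerrn_{k,n}$ with $\maxerrn_{k,n} = \max\{\norm{d_k},\dots,\norm{d_{k+n-1}}\}$, which is already derived in the excerpt. Fix $\epsilon > 0$ and $D,N \in \nat$; by the definition of i.p.-convergence to the interval $[0,\sigma r]$ it suffices to produce some $p \geq N$ with $\norm{x_{p+i}} \leq \epsilon + \sigma r$ for every $i \in \{1,\dots,D\}$.

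First I would record a uniform bound on the state. Applying~(\ref{eq:errdyn320894}) with $k=0$, using $\matnorm{M^n} \leq \sup_n \matnorm{M^n} < \infty$ (finite because $\matnorm{M^n}\to 0$ by stability, and in any case dominated by the convergent series defining $\sigma$) and $\maxerrn_{0,n} \leq b$, gives $\norm{x_n} \leq \sup_n \matnorm{M^n}\,\norm{x_0} + \sigma b =: B$ for all $n$. Since $M$ is Schur, $\matnorm{M^j}\to 0$, so I can then fix a \emph{burn-in} length $q_0$ with $\matnorm{M^{j}}\,B \leq \epsilon/2$ for all $j \geq q_0$.

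Next I would invoke the hypothesis $\norm{d_n} \cwip [0,r]$ with tolerance $\epsilon/(2\sigma)$, look-ahead window $D+q_0$, and threshold $N$: this yields $m \geq N$ with $\norm{d_{m+i}} \leq r + \epsilon/(2\sigma)$ for all $i \in \{1,\dots,D+q_0\}$ (recall that $\inf_{s\in[0,r]}\abs{s-\norm{d_{m+i}}} = \max(0,\norm{d_{m+i}}-r)$). Set $k_0 := m+1$ and $p := k_0 + q_0 \geq N$. For any $i \in \{1,\dots,D\}$ every index $k_0+\ell = m+1+\ell$ with $\ell \in \{0,\dots,q_0+D-1\}$ lies in the controlled window, so $\maxerrn_{k_0,q_0+i} \leq r + \epsilon/(2\sigma)$. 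Feeding $k=k_0$, $n=q_0+i$ into~(\ref{eq:errdyn320894}) and using $q_0+i \geq q_0$ gives $\norm{x_{p+i}} \leq \matnorm{M^{q_0+i}}\,\norm{x_{k_0}} + \sigma\,\maxerrn_{k_0,q_0+i} \leq \epsilon/2 + \sigma\bigl(r + \epsilon/(2\sigma)\bigr) = \epsilon + \sigma r$, as required.

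The main obstacle I anticipate is the index bookkeeping rather than any hard estimate: the look-ahead window drawn from the disturbance's i.p.-convergence must be long enough to absorb both the burn-in $q_0$ (needed to damp the influence of the bounded-but-not-small state $\norm{x_{k_0}}$ through $\matnorm{M^{q_0+i}}$) and the $D$ steps over which we want a uniform bound, and the offset $k_0=m+1$ must be placed so that $\maxerrn_{k_0,\cdot}$ only ever reads disturbances inside that window. The other point needing a little care is establishing the uniform state bound $B$ \emph{before} choosing $q_0$, so that $q_0$ does not depend on where $k_0$ is later placed; beyond that, all the geometric-series content is already packaged into $\sigma$ by the excerpt's preliminary lemma.
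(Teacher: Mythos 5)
Your proposal is correct and follows essentially the same route as the paper's own proof: both rest entirely on the bound $\norm{x_{k+n}} \leq \matnorm{M^{n}}\,\norm{x_k} + \sigma\,\maxerrn_{k,n}$, a uniform bound on the state, a burn-in length chosen so that $\matnorm{M^{j}}$ damps the initial state below $\epsilon/2$, and a disturbance window of length $D$ plus burn-in drawn from the i.p.\ hypothesis. Your shift $k_0 := m+1$ is in fact slightly cleaner than the paper's indexing, since it guarantees that $\maxerrn_{k_0,\cdot}$ only reads disturbances inside the controlled window.
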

\begin{proof} 
	Let $\epsilon >0,D, N \in \nat$. We desire to show: 
	\begin{equation}
	\exists p \geq N \in \nat \forall i \in \{1,...,D\}:   \norm{x_{p+i}}  \leq \epsilon + \sigma r.
	\end{equation}
	%
	
	Since we assumed the disturbances to be bounded, the sequence \seq{x_n}{n\in \nat} is bounded (cf. (\ref{eq:errdyn320894})).  That is, $\exists \beta \in \Real \forall k : \norm{x_k} \leq \beta$.
	Hence, $\matnorm{M^{n}} \, \norm{\vc x_k} \leq \matnorm{M^{n}} \beta \stackrel{n \to \infty}{\longrightarrow} 0$. Here, the convergence to zero follows from the assumption that $M$ is a stable matrix and, owing to boundedness, convergence is uniform in the following sense:
	
	\begin{equation}\exists n_0 \in \nat \forall n \geq n_0 \forall k : \matnorm{M^{n}} \, \norm{x_k} \leq \frac\epsilon 2. 
	\label{expr:1}\end{equation}

	Moreover, the assumption, $d_n \cwip [0,r]$ implies that we can choose some $m \geq N-1$ with
	\begin{equation}
	\forall i \in \{1,...,D +n_0\}:   \norm{d_{m+i}}  \leq \frac{\epsilon}{2 \sigma} +r.
	\end{equation}
	and hence, 
	\begin{equation}\label{eq:Fnconvjjff}
	\maxerrn_{m,n_0+D}	 \leq \frac{\epsilon}{2 \sigma} +r.
	\end{equation} 

	Now, choose $ p:= m +n_0$. Then for all $i \in \{1,\dots,D\}$:
	\begin{align}
		\norm{\vc x_{p+i}}  =\norm{\vc x_{m+(n_0+i)}} &\stackrel{(\ref{eq:errdyn320894})}{\leq} \matnorm{M^{n_0+i}} \, \norm{\vc x_m} + \sigma \, \maxerrn_{m,n_0+i} \\
		&\stackrel{(\ref{expr:1}),(\ref{eq:Fnconvjjff})}{\leq} \frac \epsilon 2 + \sigma \frac{\epsilon}{2 \sigma} + \sigma r = \epsilon +  \sigma r.
	\end{align}
	
\end{proof}

\subsection{Derivation of Thm. \ref{thm:l2onlinewipconsistency} }
Consider online regression. In this standard learning scenario, the inputs are assumed to be drawn i.i.d. from a distribution with density $p$ on support $\inspace$. 
We assume at each stage $t$, the new predictor's parameter $\param_t$ is to be picked based on the history $\mathbb I_t$ of past observations of input -output pairs $(x_i,y_i)$ (or just past prediction losses $\ell_i(\param_i)) \, (i <t)$. After this, $(x_t,y_t)$ and the prediction loss $ \ell(x_t;\param_t)$ can be computed (alternatively, this new loss is revealed) which concludes the stage.  Typically, one considers mean-square regression with a loss $\ell(x;\theta) = \norm{ \predf(x;\theta) - f(x) }^2$. Its expectation $\expect{\ell(x;\theta)}$  is the standard stochastic mean-square loss. Relative to this loss, we can define the model error $r_f =  \inf_{\predf \in \hypspace} \metric(f,\predf)$ for a given function $f \in \mathcal F$ where $\metric(f,g) := \expect{ \norm{f-g}_2^2 }$.  Furthermore, we can consider the worst-case model class error $r_2 := \sup_{f \in \mathcal F} r_f$.

\textbf{Assumption 1:}
\emph{ We assume the online regression task is performed by a prediction algorithm that suffers sublinear external regret, i.e. where the $\param_t$ are chosen such that $\exists \Delta \in o(T) \forall T \in \nat : \sum_{t=1}^T  \ell(x_t;\param_t) \leq \inf_{\param \in \paramspace} \sum_{t=1}^T \ell(x_t;\param) + \Delta(T)$. }
\begin{lemma} \label{lem:mswipconv1} Let $ r_f =\inf_\param \expect{ \ell(x; \param)}_{x} =\inf_\param  \int_\inspace \ell(x;\param) \d p(x)$. Under Assumption 1, we can guarantee that there exists a sequence $(V_T)_{T \in \nat}$ of random variables  with $V_T \to 0$ a.s. such that we have $$\frac 1 T \sum_{t =1}^T \ell(x_t;\theta_t)  \leq V_T + r_f.$$
	\begin{proof}
		For  each $T \in \nat$ the random function $\psi_T : \param \mapsto \frac 1 T \sum_{t=1}^T \ell(x_t;\param)$ (the joint outcome space is given by the outcome space of the random input sequence $(x_t)_{t \in \nat}$). By the strong law of large numbers we know that $\psi_T(\param) \stackrel{T \to \infty}{\rightarrow} \psi(\param) := \int_\inspace \ell(x;\param ) dp(x) = \expect{\ell(x;\param)} \forall \param$ almost surely (a.s.). Examining the formal $\epsilon-\delta-$ definitions of convergence, its is easy to see that thereby, we also have $\inf_\param \psi_T(\param) \stackrel{T \to \infty}{\rightarrow} \inf_\param \psi(\param)= r_f$ a.s.. Hence, with $V_T:= \max\{ r_f, \inf_\param \psi_T(\param)\} + \frac {\Delta(T)}{ T}  \geq\inf_\param \psi_T(\param)+ \frac {\Delta(T)}{T} $ we have $V_T \to 0$ a.s. and  $\frac 1 T \sum_{t =1}^T \ell(x_t;\param_t) \leq \inf_\param \psi_T(\param) + \frac {\Delta(T)}{T} \leq V_T + r_2$.
	\end{proof}
\end{lemma}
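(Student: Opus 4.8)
The plan is to reduce the lemma to a single almost-sure one-sided control on the empirical risk and then package the remaining slack into $V_T$. First I would introduce, for each realisation of the i.i.d.\ input stream, the empirical loss $\psi_T(\param) := \frac{1}{T}\sum_{t=1}^T \ell(x_t;\param)$ and the population loss $\psi(\param):=\expect{\ell(x;\param)}$, so that $r_f = \inf_{\param \in \paramspace}\psi(\param)$. Dividing the Assumption~1 regret inequality by $T$ gives $\frac{1}{T}\sum_{t=1}^T \ell(x_t;\param_t) \leq \inf_{\param\in\paramspace}\psi_T(\param) + \frac{\Delta(T)}{T}$, in which $\frac{\Delta(T)}{T}\to 0$ deterministically because $\Delta \in o(T)$. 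Hence it suffices to exhibit a vanishing almost-sure upper bound on $\inf_\param \psi_T(\param) - r_f$, after which I would simply set $V_T := \max\bigl\{0,\ \inf_{\param\in\paramspace}\psi_T(\param) - r_f\bigr\} + \frac{\Delta(T)}{T}$, so that the claimed inequality $\frac{1}{T}\sum_{t=1}^T \ell(x_t;\param_t) \leq V_T + r_f$ holds by construction.

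The heart of the argument is the one-sided statement $\limsup_{T\to\infty}\inf_{\param\in\paramspace}\psi_T(\param) \leq r_f$ almost surely. For a single fixed $\param_0$ the summands $\ell(x_t;\param_0)$ are i.i.d.\ with mean $\psi(\param_0)$, so the strong law of large numbers gives $\psi_T(\param_0)\to\psi(\param_0)$ almost surely, and since $\inf_\param\psi_T(\param)\leq\psi_T(\param_0)$ I obtain $\limsup_T\inf_\param\psi_T(\param)\leq\psi(\param_0)$ on a full-measure event. The subtlety, and the step I expect to be the main obstacle, is that this full-measure event depends on $\param_0$, so I cannot simply take the infimum over the (possibly uncountable) parameter space $\paramspace$ and retain an almost-sure conclusion. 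I would circumvent this by choosing a countable family of near-minimisers $\param_k$ with $\psi(\param_k)\leq r_f + \frac{1}{k}$, applying the strong law to each of the countably many $\param_k$ (each $\psi(\param_k)<\infty$, so the summands are integrable), and intersecting the resulting full-measure events; on this intersection $\limsup_T\inf_\param\psi_T(\param)\leq\psi(\param_k)\leq r_f + \frac{1}{k}$ for every $k$, whence $\limsup_T\inf_\param\psi_T(\param)\leq r_f$. I would stress that only this direction is required: the reverse inequality $\liminf_T\inf_\param\psi_T(\param)\geq r_f$ would demand a uniform (Glivenko--Cantelli type) law and is the genuinely hard ingredient one would need for full convergence of the infima, but the lemma asks only for an upper envelope $V_T$ and so sidesteps it.

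With the $\limsup$ bound in hand, the remaining steps are routine. The quantity $\max\{0,\ \inf_\param\psi_T(\param)-r_f\}$ is nonnegative and has nonpositive $\limsup$, hence converges to $0$ almost surely; adding the deterministic null sequence $\Delta(T)/T$ preserves this, so $V_T\to 0$ almost surely, and the defining inequality $\frac{1}{T}\sum_{t=1}^T\ell(x_t;\param_t)-r_f \leq \inf_\param\psi_T(\param)-r_f + \Delta(T)/T \leq V_T$ is immediate. The last piece of bookkeeping is to confirm that $V_T$ is genuinely a random variable, i.e.\ that $\inf_{\param\in\paramspace}\psi_T(\param)$ is measurable; under the mild regularity that $\param\mapsto\ell(x;\param)$ is continuous and $\paramspace$ is separable, this infimum coincides with the infimum over a countable dense subset and is therefore measurable, which meshes neatly with the countable near-minimiser construction above.
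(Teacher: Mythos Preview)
Your proposal follows essentially the same route as the paper: introduce the empirical loss $\psi_T$, invoke the strong law of large numbers, combine with the regret bound divided by $T$, and absorb the slack into $V_T$. Your definition of $V_T$ is, up to an apparent typo in the paper (where $V_T$ as written converges to $r_f$ rather than $0$, and $r_2$ appears in place of $r_f$), the same object.

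Where your argument differs is in rigour rather than strategy. The paper asserts that pointwise a.s.\ convergence $\psi_T(\param)\to\psi(\param)$ for each $\param$ implies $\inf_\param\psi_T(\param)\to\inf_\param\psi(\param)$ a.s., appealing only to ``the formal $\epsilon$--$\delta$ definitions''. As you correctly note, this two-sided claim is not automatic over an uncountable parameter space: the lower bound $\liminf_T\inf_\param\psi_T(\param)\geq r_f$ would require a uniform (Glivenko--Cantelli) law. Your observation that only the one-sided $\limsup$ bound is needed, together with the countable near-minimiser device to secure it on a single full-measure event, and your attention to measurability of the infimum, genuinely tightens the argument. So: same approach, but your version closes a gap the paper glosses over.
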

The next lemma shows that the sequence of expected losses is nonnegative.
\begin{lemma}\label{lem:mswipconv2}
	With assumptions and definitions as before, we have 
	\[
	\forall t \in \nat: \expect{\ell(x_t;\param_t) | \param_t,\mathfrak  O_t}_{x_t} \geq r_f  \hspace{1em}\text{and} \hspace{1em}\expect{\ell(x_t;\param_t) |\mathfrak  O_t}_{x_t,\param_t} \geq r_f.\]
	\begin{proof}
		Since the inputs are drawn independently, we have $p(x_t  |\param_t, \mathbb I_t) = p(x_t)$ and hence,
		$\forall t, \param_t: \expect{\ell(\cdot;\param_t) | \param_t; \mathbb I_t}_{x_t} = \int_\inspace \ell(x_t;\param_t) dp(x_t| \param_t , \mathbb I_t) = \int_\inspace \ell(x_t;\param_t) \d p(x_t ) \geq \inf_\param \int_\inspace \ell(x;\param)\d p(x) = r_f$.
		Thus,  also: $\expect{\ell(x_t;\param_t) |\mathfrak  O_t}_{x_t,\param_t} = \int_{\param_t} \expect{ \ell(x_t) | \param_t, \mathbb I_t}_{x_t} \d p(\param_t | \mathbb I_t) \geq \int_{\param_t} r_f \d p(\param_t | \mathbb I_t) = r_f$.
	\end{proof}
\end{lemma}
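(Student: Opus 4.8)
The plan is to derive the stated lower bound directly from the independence of the current input $x_t$ and the parameter choice $\param_t$, obtaining the first (inner) inequality pointwise in $\param_t$ and then deducing the second (fully marginalised) inequality from it by the tower property. No convergence argument is needed; the statement is a pure Jensen-free consequence of the definition of $r_f$ as an infimum together with monotonicity of the expectation.

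First I would invoke the modelling assumption that the inputs are drawn i.i.d.\ and that $\param_t$ is selected on the basis of the observed history $\mathfrak O_t = \mathbb I_t$ \emph{alone} --- crucially, without access to the current sample $x_t$. This is the one substantive step, and it yields the conditional-law identity $p(x_t \mid \param_t, \mathbb I_t) = p(x_t)$: conditioning on the realised parameter does not perturb the distribution of $x_t$. The inner conditional expectation therefore collapses to a deterministic population integral,
\[
\expect{\ell(x_t;\param_t) \mid \param_t, \mathbb I_t}_{x_t} = \int_\inspace \ell(x;\param_t)\,\d p(x) = \psi(\param_t),
\]
where $\psi$ is the population loss from Lem.~\ref{lem:mswipconv1}.

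Next, for each realised value $\param_t \in \paramspace$ I would bound this integral below by the infimum over all parameters, $\psi(\param_t) \geq \inf_{\param} \psi(\param) = r_f$, which is immediate from the definition of $r_f$. Since this holds for (almost) every outcome of $\param_t$, it establishes the first claim $\expect{\ell(x_t;\param_t) \mid \param_t, \mathfrak O_t}_{x_t} \geq r_f$. To obtain the second inequality I would then integrate over the conditional law of $\param_t$ given $\mathfrak O_t$ and apply the law of total expectation: because the inner conditional expectation is $\geq r_f$ uniformly in $\param_t$, monotonicity of the integral gives
\[
\expect{\ell(x_t;\param_t)\mid \mathfrak O_t}_{x_t,\param_t} = \int_{\paramspace}\expect{\ell(x_t;\param_t)\mid \param_t,\mathbb I_t}_{x_t}\,\d p(\param_t\mid\mathbb I_t) \geq \int_{\paramspace} r_f\,\d p(\param_t\mid\mathbb I_t) = r_f.
\]

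The only genuinely delicate point is the justification of $p(x_t\mid\param_t,\mathbb I_t)=p(x_t)$ --- that is, that the parameter update is measurable with respect to the past information set and cannot peek at the current i.i.d.\ sample. Once this measurability/independence structure is in place, the remaining steps (replacing $\psi(\param_t)$ by its infimum, and integrating a pointwise inequality over $\param_t$) are routine monotonicity arguments, so I would expect the measurability discussion to be where the real care is required rather than any calculation.
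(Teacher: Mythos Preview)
Your proposal is correct and follows essentially the same approach as the paper: use independence of $x_t$ from $(\param_t,\mathbb I_t)$ to reduce the conditional expectation to the population loss $\psi(\param_t)$, bound it below by $\inf_\param \psi(\param)=r_f$, and then integrate out $\param_t$ via the tower property to get the second inequality. Your discussion of the measurability issue (that $\param_t$ cannot peek at $x_t$) is, if anything, slightly more explicit than the paper's one-line justification.
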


\begin{theorem}[Thm. \ref{thm:l2onlinewipconsistency}]
	Assume the online regression task is performed by a prediction algorithm that suffers sub-linear external regret, i.e. where the $\param_t$ are chosen such that $\exists \Delta \in o(T) \forall T \in \nat : \sum_{t=1}^T  \ell(x_t;\param_t) \leq \inf_{\param \in \paramspace} \sum_{t=1}^T \ell(x_t;\param) + \Delta(T)$. Then the sequence $(\expect{ \ell(x_t;\param_t)}  )_{t  \in \nat}$ of expected prediction losses i.p.-converges to at most the representational error, that is:  $$\expect{ \ell(x_t;\param_t)} \stackrel{t \to \infty}{\cwip}r_f \in  [0,r_2].$$
	
	\begin{proof}
		For $t \in \nat$ let $s_t: = \expect{\ell(x_t;\param_t) | \mathbb I_t} - r_f$, 
		which by Lem. \ref{lem:mswipconv2} are known to be nonnegative. 
		By Lem. \ref{lem:mswipconv2} we know $\frac 1 T \sum_{t =1}^T \ell(x_t;\theta_t)  \leq V_T + r_f $ for some a.s. vanishing $V_T$. Hence,
		$$0 \leq \expect{\frac 1 T \sum_{t =1}^T (\ell(x_t;\theta_t) - r_f ) } = \frac 1 T  \sum_{t=1}^T s_t \leq \expect{V_T} \stackrel{T \to \infty} \rightarrow 0.$$ Appealing to Lem.  \ref{lem:sumStconvtpnewconv} allows us to conclude $s_t \cwipt 0$.
	\end{proof}
\end{theorem}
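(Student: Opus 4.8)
The plan is to reduce the statement to Lemma~\ref{lem:sumStconvtpnewconv} by exhibiting a nonnegative sequence --- the expected stage losses shifted down by $r_f$ --- whose running arithmetic means tend to zero.

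First I would use the i.i.d.\ assumption on the inputs to lower-bound each expected stage loss. Since $x_t$ is drawn independently of the information set $\mathbb I_t$ that determines $\param_t$, conditioning does not alter the law of $x_t$, so $\expect{\ell(x_t;\param_t) \mid \param_t, \mathbb I_t} = \int_\inspace \ell(x;\param_t)\,\d p(x) \geq \inf_{\param}\int_\inspace \ell(x;\param)\,\d p(x) = r_f$, and averaging over $\param_t$ gives $\expect{\ell(x_t;\param_t) \mid \mathbb I_t} \geq r_f$ as well. Hence $s_t := \expect{\ell(x_t;\param_t) \mid \mathbb I_t} - r_f \geq 0$, which supplies the nonnegativity hypothesis of Lemma~\ref{lem:sumStconvtpnewconv}.

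Next I would control the running mean of the \emph{realised} losses. Writing $\psi_T(\param) := \frac 1 T \sum_{t=1}^T \ell(x_t;\param)$, the strong law of large numbers gives $\psi_T(\param) \to \expect{\ell(x;\param)}$ a.s.\ for each fixed $\param$, and unwinding the $(\epsilon,\delta)$ definition of convergence then yields $\inf_{\param}\psi_T(\param) \to \inf_{\param}\expect{\ell(x;\param)} = r_f$ a.s. Combining this with the sublinear-regret hypothesis $\frac 1 T \sum_{t=1}^T \ell(x_t;\param_t) \leq \inf_{\param}\psi_T(\param) + \Delta(T)/T$ and $\Delta \in o(T)$ produces a nonnegative random sequence $V_T$ with $V_T \to 0$ a.s.\ and $\frac 1 T \sum_{t=1}^T \ell(x_t;\param_t) \leq V_T + r_f$ for every $T$.

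Finally I would take expectations and invoke the lemma: $0 \leq \frac 1 T \sum_{t=1}^T s_t = \expect{\frac 1 T \sum_{t=1}^T (\ell(x_t;\param_t) - r_f)} \leq \expect{V_T}$, so once $\expect{V_T}\to 0$ is established, Lemma~\ref{lem:sumStconvtpnewconv} applied to $(s_t)$ gives $s_t \cwip 0$, i.e.\ $\expect{\ell(x_t;\param_t)} \cwip r_f$; and $r_f \in [0,r_2]$ holds by the definition of the worst-case model class error. The step I expect to be the main obstacle is the passage from $V_T \to 0$ a.s.\ to $\expect{V_T}\to 0$: this is a uniform-integrability matter, most cleanly settled by assuming the loss is uniformly bounded on $\inspace\times\paramspace$ (e.g.\ a compact feasible set together with a continuous loss), under which dominated convergence applies directly. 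A secondary point needing care is the measure-theoretic bookkeeping of the filtration $(\mathbb I_t)$, so that the conditional expectations in the first step are well defined and the tower property is legitimate.
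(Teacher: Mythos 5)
Your proposal follows essentially the same route as the paper's own proof: the nonnegativity of $s_t = \expect{\ell(x_t;\param_t)\mid \mathbb I_t} - r_f$ via the i.i.d.\ input argument is the paper's Lemma~\ref{lem:mswipconv2}, the construction of the a.s.\ vanishing $V_T$ from the strong law of large numbers plus the regret bound is Lemma~\ref{lem:mswipconv1}, and the conclusion via Lemma~\ref{lem:sumStconvtpnewconv} is identical. Your observation that passing from $V_T \to 0$ a.s.\ to $\expect{V_T}\to 0$ requires a uniform-integrability or bounded-loss argument is a legitimate point of care that the paper's proof silently elides, so flagging it (and resolving it via a bounded loss and dominated convergence) only strengthens the argument.
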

Applied to our example of RBFN-based online regression, the theorem states that the mean-square prediction error of the predictors that are found online converges with i.p. to the best mean-square representational error attainable by the presupposed RBFN structure.
\vspace{-1em}

\end{document}